\theoremstyle{definition}
\newtheorem{remark}{Remark}[section]
\newtheorem{esempio}{Example}[section]
\theoremstyle{plain}
\newtheorem{definizione}{Definition}[section]
\newtheorem{teorema}{Theorem}[section]
\newtheorem{proposizione}{Proposition}[section]
\newtheorem{lemma}{Lemma}[section]
\newtheorem{corollario}{Corollary}[section]
\newcommand{\numberset}{\mathbb}
\newcommand{\R}{\numberset{R}}
\newcommand{\Z}{\numberset{Z}}
\title{A Conley-type decomposition \\ of the strong chain recurrent set}
\author{\hspace{-1cm}{OLGA BERNARDI${\,
}^{1}$ \quad ANNA FLORIO${\,
}^{2}$                  }
\vspace{0.3cm}
\\
\hspace{-1cm}${\ }^{1}$ Dipartimento di Matematica ``Tullio Levi-Civita'',
Universit\`a di Padova,\\
\hspace{-1cm}Via Trieste, 63 - 35121 Padova, Italy
\\ 
\\
\hspace{-1cm}${\ }^{2}$ Laboratoire de Mathématiques d'Avignon, Avignon Université, \\
\hspace{-1cm} 84018 Avignon, France \\}
\date{ }
\begin{document}

\maketitle

\begin{abstract}
\noindent For a continuous flow on a compact metric space, the aim of this paper is to prove a Conley-type decomposition of the strong chain recurrent set. We first discuss in details the main properties of strong chain recurrent sets. We then introduce the notion of strongly stable set as an invariant set which is the intersection of the $\omega$-limits of a specific family of nested and definitively invariant neighborhoods of itself. This notion strengthens the one of stable set; moreover, any attractor results strongly stable. We then show that strongly stable sets play the role of attractors in the decomposition of the strong chain recurrent set; indeed, we prove that the strong chain recurrent set coincides with the intersection of all strongly stable sets and their complementary.
\end{abstract}

\section{Introduction}

\indent Let $\phi = \{ \phi_t \}_{t \in \R }$ be a continuous flow on a compact metric space $(X,d)$. In the celebrated paper \cite{conl33}, Charles Conley introduced the notion of \textit{chain recurrence}. A point $x\in X$ is said to be chain recurrent if for any $\varepsilon >0$ and $T > 0$, there exists a finite sequence $(x_i,t_i)_{i=1,...,n}\subset X\times\R$ such that $t_i\geq T$ for all $i$, $x_1 = x$ and setting $x_{n+1}:=x$, we have 
\begin{equation} \label{prima intro}
d(\phi_{t_i}(x_i),x_{i+1}) < \varepsilon \qquad \forall i = 1,\ldots,n.
\end{equation}
The set of chain recurrent points, denoted by $\mathcal{CR}(\phi)$, results closed, invariant and it (strictly) contains the set of non-wandering points. In the same paper, Conley defined attractor-repeller pairs as follows. We know that a set $A \subset X$ is an attractor if it is the $\omega$-limit of a neighborhood $U$ of itself, that is $A = \omega(U)$. Similarly, a set which is the $\alpha$-limit of one of its neighborhoods is a repeller. 
Given an attractor $A$, he proved that the set $A^*$, constituted by the points $x \in X$ whose $\omega$-limit has empty intersection with $A$, is a repeller, hence called the complementary repeller of $A$. Successively, Conley investigated the refined link between chain recurrence and attractors, showing that
\begin{equation} \label{A A star}
{\cal{CR}}(\phi) = \bigcap \left\{ A \cup A^*: \ A \text{ is an attractor}\right\}.
\end{equation}
As an outcome, he made explicit a continuous Lyapunov function which is constant on the connected components of the chain recurrent set and strictly decreasing outside. The construction of such a function is essentially based on the closure and countability of the sets $A \cup A^*$. This result is considered so important that it is sometimes called the fundamental theorem of dynamical systems, see e.g. \cite{norton}. \\
\indent In the same year, Robert Easton in \cite{east41} defined strong chain recurrent points substituting condition (\ref{prima intro}) by
\begin{equation} \label{seconda intro}
\sum_{i=1}^n d(\phi_{t_i}(x_i),x_{i+1}) < \varepsilon.
\end{equation}
Moreover, he connected this stronger notion to the property for the corresponding Lipschitz first integrals to be constants. The set of strong chain recurrent points is denoted by $\mathcal{SCR}(\phi)$. \\
For $Y \subset X$, we define $\Omega(Y,\varepsilon,T)$ to be the set of $x \in X$ such that there is a strong $(\varepsilon,T)$-chain from a point $y \in Y$ to $x$, and  
$$\bar{\Omega} (Y) := \bigcap_{\varepsilon > 0, \ T > 0} \Omega (Y,\varepsilon,T).
$$
\indent Let $f$ be a homeomorphism on a compact metric space $(X,d)$. We remark that Albert Fathi and Pierre Pageaut recently gave a new insight into the subject (see \cite{fathi2015} and \cite{pageault}). Their point of view is very different from the one of Conley's original work and it is inspired by the celebrated work of A. Fathi in Weak KAM Theory (see \cite{fathiWeakKAM}). This approach is based on a re-interpretation from a purely variational perspective of chain recurrent and strong chain recurrent sets. One of their fundamental results is the construction of a Lipschitz continuous Lyapunov function which is constant on the connected components of the strong chain recurrent set and strictly decreasing outside (see Proposition 4.2 in \cite{pageault}). %\marginpar{\color{red}{Si: è Lipschitz, strettamente decrescente fuori dallo strong e costante sulle componenti connesse ma NON completa!}} 
An adaptation of Fathi and Pageault's techniques for a flow $\phi = \{\phi_t\}_{t \in \R}$ which is Lipschitz continuous for every $t \ge 0$, uniformly for $t$ on compact subsets of $[0,+\infty)$, can be found in \cite{abbbercar}. \\
\indent This paper is devoted to investigate --in the original spirit of Conley's work-- the structure of the strong chain recurrent set $\mathcal{SCR}(\phi)$ for a continuous flow on a compact metric space. In order to describe our results in some more details, we need to introduce the notion of strongly stable set. The next definition comes after a careful understanding of how $\mathcal{SCR}(\phi)$ dynamically strengthens $\mathcal{CR}(\phi)$. \\
 \\
DEFINITION. \textit{A closed set $B \subset X$ is strongly stable if there exist $\rho > 0$, a family $(U_{\eta})_{\eta \in (0,\rho)}$ of closed nested neighborhoods of $B$ and a function 
$$(0,\rho) \ni \eta \mapsto t(\eta) \in (0,+\infty)$$
bounded on compact subsets of $(0,\rho)$, such that 
\begin{itemize}
\item[$(i)$] For any $0<\eta < \lambda < \rho$, %$$\{x \in X: \ d(x,B) < \eta \} \subseteq U_{\eta} \qquad \text{and} \qquad 
$\{x \in X: \ d(x,U_{\eta}) < \lambda -\eta \} \subseteq U_{\lambda}$.
\item[$(ii)$] $B = \bigcap_{\eta \in (0,\rho)} \omega(U_{\eta})$.
\item[$(iii)$] For any $0 < \eta < \rho$, $cl \left( \phi_{[t(\eta),+\infty)}(U_{\eta}) \right) \subseteq U_{\eta}$.
\end{itemize}}
\noindent \noindent In particular, $B$ is (closed and) invariant and so $\omega(B) = B$. The above notion --see Remark \ref{osse Hurley}-- strengthens the one of stable set, or Lyapunov stable set (\textit{cfr.} \cite{akinstable} (Page 1732) and \cite{akinhurleystable} (Paragraph 1.1)). Moreover, as shown in Example \ref{caratte attractors}, any attractor results strongly stable. Analogously to the case of attractors, given a strongly stable set $B$, we define its complementary $B^\bullet$ to be the set of points $x \in X$ whose $\omega$-limit has empty intersection with $B$. We first prove the next result. \\ \\
THEOREM 1. \textit{If $Y \subset X$ is closed, then $\bar{\Omega}(Y)$ is the intersection of all strongly stable sets in $X$ which contain $\omega(Y)$, that is %\marginpar{{\color{red}{THEO. 1}}}
\begin{equation*}
\bar{\Omega}(Y) = \bigcap \left\{ B: \ B\text{ is strongly stable and } \omega(Y)\subseteq B \right\}.
\end{equation*}} \\
\noindent The proof is divided into two main parts, corresponding to the two inclusions. The most delicate point is understanding and making explicit the family of strongly stable sets which actually decompose $\bar{\Omega}(Y)$. \\
\noindent As an outcome, we establish that strongly stable sets play the role of attractors in the decomposition of the strong chain recurrent set. More precisely, we prove that $\mathcal{SCR}(\phi)$ admits the following Conley-type decomposition: \\ \\
THEOREM 2. \textit{If $\phi : X \times \R \rightarrow X$ is a continuous flow on a compact metric space, then the strong chain recurrent set is given by
\begin{eqnarray} \label{nostra intro}
{\cal{SCR}}(\phi) = \bigcap \{ B \cup B^{\bullet} :\ B \text{ is strongly stable\,}\}.\label{seconda decomposizione teorema conclusivo}
\end{eqnarray}} \\
\noindent We notice that equality (\ref{nostra intro}) turns into (\ref{A A star}) in the case where $\mathcal{SCR}(\phi) = \mathcal{CR}(\phi)$. \\
\indent The paper is organized as follows. Section \ref{prel} is devoted to preliminaries. In Section \ref{relazioni} we discuss in details the main properties of strong chain recurrent sets. The proofs of the theorems above are given in Section \ref{sezione ris}; we conclude with an example in a $1$-dimensional case. 

~\newline
\textbf{Acknowledgements.} \noindent The authors are very grateful to prof. Marie-Claude Arnaud for the careful reading of the preprint and for the precious advices and suggestions, which also enable us to improve the result of the theorems. The authors are sincerely in debt with the anonymous referee who proposed to use the name \textit{``strongly stable set''} and explained the relation between strongly stable and stable sets. Moreover, this referee gave a fundamental contribution in order to re-organize the first version of the paper. 
\section{Preliminaries} \label{prel}

Throughout the whole paper, $\phi: \R \times X \rightarrow X$ is a continuous flow on a compact metric space $(X,d)$. We use the notation $\phi(t,x) = \phi_t(x)$. This section is devoted to introduce some standard notions. Through the whole paper, $cl$ denotes the closure and $B(x,\varepsilon)$ the open ball with center $x \in X$ and radius $\varepsilon > 0$.
\\
\indent A set $C \subset X$ is called invariant if for any $x \in C$ it holds $\phi_t(x) \in C$ for all $t \in \R$; in such a case, $\phi_t(C) = C$ for any $t \in \R$. A set $C \subset X$ is called forward invariant if for any $x \in C$ one has $\phi_t(x) \in C$ for all $t \ge 0$. We now recall the well-known definition of limit sets, which describe the long term behavior of a subset $Y \subset X$ subjected to the flow $\phi$.
\begin{definizione} {\em ($\omega$-limit and $\alpha$-limit sets)} \par\noindent
The limit sets of $Y \subset X$ are given by
$$
\omega(Y) := \bigcap_{T \ge 0} cl\left( \phi_{[T,+\infty)}(Y) \right) \qquad \text{and} \qquad \alpha(Y) := \bigcap_{T \ge 0} cl\left( \phi_{(-\infty,-T]}(Y) \right).
$$
\end{definizione}
\noindent We point out that the above definition can be formulated in the more general case where $X$ is a Hausdorff space (not necessarily compact), see e.g. \cite{hurleyNoncompact}, \cite{hurley1991} and \cite{sal85}. \\
Clearly, if $Y \subseteq Z$ then $\omega(Y) \subseteq \omega(Z)$. %Moreover, if $\cal{I}$ is a set of indices (either finite or infinite), in general it holds that
%\begin{equation}\label{unione eintersezione uncountable omegalimit}
%\bigcup_{i \in \cal{I}} \omega (Y_{i}) \ \subseteq \ \omega \left( \bigcup_{i \in \cal{I}} Y_{i} \right) \qquad \text{and} \qquad
%\omega \left( \bigcap_{i \in \cal{I}} Y_{\eta} \right) \ \subseteq \ \bigcap_{i \in \cal{I}} \omega(Y_{i}).
%\end{equation}
%{\color{red}as shown by an example in \cite{teschi12} (Page 229, Section 8.1) for the first inclusion and as shown by the following example for the second one.
%Let us consider the dynamical system of Figure \ref{Figura intersezione stretta}: clearly, the $\omega$-limit set of any single point in $\mathbb{D}^1\setminus(\mathbb{S}^1\cup \{0\})$ is $\mathbb{S}^1$, but the intersection of distinct single points is the empty set and $\omega(\emptyset)=\emptyset$.}

%\begin{figure}[h]
%\centering
%\begin{tikzpicture}
%\draw [thick] (0,0) circle (15mm);
%\node at (1.85,0) {$\mathbb{S}^1$};
%\draw plot [mark=*] coordinates
%	{(0,0)};
%\node at (.3,0) {0};
%\draw [latex-] (0,1.5) arc (90:0:1.5);
%\draw [latex-] (0,-1.5) arc (270:180:1.5);
%\draw plot [variable=\t, domain=0:360, samples=50]
%	({cos(\t)/10+cos(\t)*(\t/360)-1/10}, {sin(\t)/10+sin(\t)*(\t/360)});
%\draw [latex-] (1,0.18) arc (10:0:1);
%\end{tikzpicture}
%\caption{Example of dynamical system for which inclusion \eqref{intersezione uncountable omegalimit} is strict.}
%\label{Figura intersezione stretta}
%\end{figure}

In the sequel we collect some useful facts about $\omega(Y)$, in the case where $(X,d)$ is a compact metric space. Equivalent properties hold for $\alpha(Y)$. We refer to \cite{ding2005}, \cite{sal85} and \cite{teschi12} for an exhaustive treatment of the subject. \begin{proposizione} \label{eccola} Let $Y \subset X$.
\begin{itemize}
%\item[$(a)$] $x \in \omega(Y)$ if and only if there are sequences $y_n \in Y$ and $t_n \in \mathbb{R}$, $t_n \to +\infty$, such that $\lim_{n \to +\infty} \phi_{t_n}(y_n) = x$.
\item[$(a)$] $\omega(Y)$ is compact and nonempty. If in addition $Y$ is connected, then $\omega(Y)$ is connected. 
\item[$(b)$] $\omega(Y)$ is the maximal invariant set in $cl \left( \phi_{[0,+\infty)}(Y) \right)$. 
\item[$(c)$] $x \in \omega(Y)$ if and only if there are sequences $y_n \in Y$ and $t_n \in \mathbb{R}$, $t_n \to +\infty$, such that $\lim_{n \to +\infty} \phi_{t_n}(y_n) = x$. 
\item[$(d)$] For any neighborhood $U$ of $\omega(Y)$ there exists a time $\bar{t} > 0$ such that  
$$\phi_{[\bar{t},+\infty)}(Y) \subset U.$$
\end{itemize}
\end{proposizione}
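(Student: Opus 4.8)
The plan is to prove the four items in order, the only structural ingredients being the compactness of $X$ and the fact that each time-$t$ map $\phi_t$ is a homeomorphism of $X$ (it is continuous with continuous inverse $\phi_{-t}$), hence uniformly continuous. It is convenient to establish the sequential characterization $(c)$ early, since it streamlines both $(b)$ and $(d)$. For $(a)$, one writes $\omega(Y)=\bigcap_{T\ge 0} cl\left(\phi_{[T,+\infty)}(Y)\right)$ as a nested intersection of nonempty closed — hence compact — subsets of $X$: closedness and compactness are immediate, and nonemptiness follows from the finite intersection property. If moreover $Y$ is connected, then each $\phi_{[T,+\infty)}(Y)$ is the continuous image of the connected set $[T,+\infty)\times Y$, hence connected, and so is its closure; since a nested intersection of compact connected sets in a metric space is connected, $\omega(Y)$ is connected.

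For $(c)$, the ``if'' direction simply unwinds the definition: if $\phi_{t_n}(y_n)\to x$ with $t_n\to+\infty$ and $y_n\in Y$, then for every $T\ge 0$ eventually $\phi_{t_n}(y_n)\in\phi_{[T,+\infty)}(Y)$, so $x$ lies in each $cl\left(\phi_{[T,+\infty)}(Y)\right)$; the ``only if'' direction picks, for each $n$, a point $y_n\in Y$ and a time $t_n\ge n$ with $d(\phi_{t_n}(y_n),x)<1/n$, which is possible because $x\in cl\left(\phi_{[n,+\infty)}(Y)\right)$. For $(b)$, the inclusion $\omega(Y)\subseteq cl\left(\phi_{[0,+\infty)}(Y)\right)$ is the case $T=0$; invariance follows from $(c)$ by applying the homeomorphism $\phi_s$ to a defining sequence $\phi_{t_n}(y_n)\to x$ and noting that $t_n+s\to+\infty$. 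For maximality, given an invariant $K\subseteq cl\left(\phi_{[0,+\infty)}(Y)\right)$ and $x\in K$, for each $T\ge 0$ the point $\phi_{-T}(x)$ still lies in $cl\left(\phi_{[0,+\infty)}(Y)\right)$, so it is approximated arbitrarily well by points $\phi_s(y)$ with $y\in Y$ and $s\ge 0$; applying the uniformly continuous map $\phi_T$ transports such an approximation into an approximation of $x$ by $\phi_{T+s}(y)$ with $T+s\ge T$, and letting $T\to+\infty$ and the accuracy tend to $0$ yields $x\in\omega(Y)$ via $(c)$.

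Finally, $(d)$ is proved by contradiction: after shrinking $U$ to an open neighborhood of $\omega(Y)$, if the conclusion failed there would exist $y_n\in Y$ and times $\tau_n\to+\infty$ with $\phi_{\tau_n}(y_n)\in X\setminus U$; compactness of the closed set $X\setminus U$ produces a subsequential limit $z\in X\setminus U$, while $(c)$ forces $z\in\omega(Y)\subseteq U$, a contradiction. I expect the only genuinely delicate point to be the maximality part of $(b)$: one must carefully exploit the uniform continuity of $\phi_T$ on the compact space $X$ (equivalently, organize the estimates through a diagonal extraction) so that $\delta$-closeness of $\phi_s(y)$ to $\phi_{-T}(x)$ really upgrades to $\varepsilon$-closeness of $\phi_{T+s}(y)$ to $x$ with the time lower bound $T+s\ge T$ preserved; all the remaining steps are routine compactness and continuity manipulations.
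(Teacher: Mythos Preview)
Your proof is correct. Note, however, that the paper does not itself prove this proposition: it simply states the result and refers to \cite{ding2005}, \cite{sal85} and \cite{teschi12} for an exhaustive treatment, so there is no ``paper's own proof'' to compare against. The arguments you give are the standard ones from those sources.

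One small simplification you might adopt for the maximality step in $(b)$: since each $\phi_T$ is a homeomorphism of the compact space $X$, it commutes with closure, so
\[
\phi_T\big(cl(\phi_{[0,+\infty)}(Y))\big)=cl\big(\phi_T(\phi_{[0,+\infty)}(Y))\big)=cl(\phi_{[T,+\infty)}(Y)).
\]
Hence for $x$ in an invariant set $K\subseteq cl(\phi_{[0,+\infty)}(Y))$ one has $\phi_{-T}(x)\in K\subseteq cl(\phi_{[0,+\infty)}(Y))$ and therefore $x=\phi_T(\phi_{-T}(x))\in cl(\phi_{[T,+\infty)}(Y))$ directly, with no $\varepsilon$--$\delta$ bookkeeping needed. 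Your uniform-continuity route reaches the same conclusion and is perfectly valid; it is just slightly more laborious than necessary.
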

\noindent In particular, a closed set $C$ is invariant if and only if $C = \omega(C)$. Moreover, for any $Y \subset X$, $\omega(\omega(Y)) = \omega(Y)$. \\
\indent In the celebrated paper \cite{conl33}, Conley introduced attractors, repellers and a weak form of recurrence, called chain recurrence. 
\begin{definizione} {\em (Attractor and repeller)} \\
A subset $A\subset X$ is called {\it{attractor}} if there exists a neighborhood $U$ of $A$ such that $\omega(U) =A$. Similarly, a set which is the $\alpha$-limit of a neighborhood of itself is called repeller.
\end{definizione}
\noindent Given an attractor $A \subset X$, let us define
\begin{equation} \label{complementary}
A^* := \{ x\in X :\ \omega(x)\cap A=\emptyset \} \qquad \text{and} \qquad C(A,A^*) := X \setminus (A\cup A^*).
\end{equation}
The sets $A^*$ and $C(A,A^*)$ are called respectively the complementary (or dual) repeller of $A$ and the connecting orbit of the pair $(A,A^*)$. The next properties hold (see \cite{akin46}, \cite{conl33} and \cite{sal85}). 
\begin{proposizione}\label{proprieta attrattore}
Let $A\subset X$ be an attractor. 
\begin{itemize}
\item[$(a)$] $A^*$ is a repeller.
\item[$(b)$] A point $x \in C(A,A^*)$ if and only if $\omega(x) \subseteq A$ and $\alpha(x) \subseteq A^*$.
\item[$(c)$] For any $x\in X$, either $\omega(x) \subseteq A$ or $x\in A^*$. 
\end{itemize}
\end{proposizione}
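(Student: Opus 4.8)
The plan is to derive $(a)$, $(b)$, $(c)$ from a single structural fact --- that $A$ admits a compact \emph{attractor block} --- and to obtain the statements concerning $A^{*}$ and $\alpha$-limits by running the same arguments for the time-reversed flow $\phi^{-1}=\{\phi_{-t}\}_{t\in\R}$. First I would upgrade the defining neighborhood: starting from an open $U\supseteq A$ with $\omega(U)=A$, one shrinks $U$ (using that $A$, being an $\omega$-limit, is compact, see Proposition~\ref{eccola}$(a)$) so that $cl(U)$ is compact and still $\omega(cl(U))=A$; by Proposition~\ref{eccola}$(d)$ the set $cl(U)$ is eventually pushed inside $U$, and a classical manipulation of this (pushing forward and taking closures) yields a compact $N$ with
\[
A\subseteq\operatorname{int}(N),\qquad \phi_t(N)\subseteq\operatorname{int}(N)\ \text{ for all }t>0,\qquad A=\bigcap_{t\ge0}\phi_t(N)=\omega(N).
\]
I expect this construction to be the only genuinely delicate point; once it is in hand the rest is short. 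Note that $N^{\circ}:=X\setminus\operatorname{int}(N)$ is compact and that the trapping inclusion rewrites as $\phi_{-t}(N^{\circ})\subseteq X\setminus N\subseteq\operatorname{int}(N^{\circ})$ for $t>0$; thus $N^{\circ}$ is an attractor block for $\phi^{-1}$, with associated invariant set $\bigcap_{t\ge0}\phi_{-t}(N^{\circ})$. I will also use that for any trapping region $M$ (i.e. compact with $\phi_t(M)\subseteq M$, $t\ge0$) one has $\omega(M)=\bigcap_{t\ge0}\phi_t(M)$, which is immediate since the sets $\phi_t(M)$ are then nested.

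Next I would prove the dichotomy $(c)$. Fix $x\in X$. If $\phi_{t_0}(x)\in N$ for some $t_0\ge0$, the trapping property gives $\phi_t(x)\in\operatorname{int}(N)$ for every $t>t_0$, so $\omega(x)\subseteq\bigcap_{t\ge0}\phi_t(N)=A$; in particular $\omega(x)\cap A=\omega(x)\neq\emptyset$ and $x\notin A^{*}$. Otherwise $\phi_{[0,+\infty)}(x)$ misses $N$, so $\omega(x)\subseteq cl(X\setminus N)=N^{\circ}$, which is disjoint from $A\subseteq\operatorname{int}(N)$; hence $x\in A^{*}$. This proves $(c)$, and reading the equivalence off --- using that $\phi_{t_0}(x)\in N$ forces $\phi_{t_0+1}(x)\in\operatorname{int}(N)$ --- it identifies
\[
A^{*}=\{x\in X:\ \phi_t(x)\notin\operatorname{int}(N)\ \text{for all }t\ge0\}=\bigcap_{t\ge0}\phi_{-t}(N^{\circ}).
\]

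For $(a)$: from the last identity, $A^{*}\subseteq X\setminus N\subseteq\operatorname{int}(N^{\circ})$ (if $x\in A^*\cap N$ then $\phi_1(x)\in\phi_1(N)\subseteq\operatorname{int}(N)$, contradicting the description of $A^{*}$), so $N^{\circ}$ is a compact neighborhood of $A^{*}$; applying $\omega(M)=\bigcap_{t\ge0}\phi_t(M)$ to the flow $\phi^{-1}$ and the block $N^{\circ}$ gives $\alpha(N^{\circ})=\bigcap_{t\ge0}\phi_{-t}(N^{\circ})=A^{*}$. Therefore $A^{*}$ is the $\alpha$-limit of a neighborhood of itself, i.e. a repeller; being an $\alpha$-limit it is closed and invariant.

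Finally, $(b)$. Applying the dichotomy just proved to $\phi^{-1}$ with the block $N^{\circ}$ (whose associated invariant set is $A^{*}$) yields: for every $x$, either $\alpha(x)\subseteq A^{*}$, or $\phi_{-t}(x)\in\operatorname{int}(N)$ for all $t\ge0$ and then $x\in\bigcap_{t\ge0}\phi_t(N)=A$. So if $x\in C(A,A^{*})$, then $x\notin A^{*}$ forces $\omega(x)\subseteq A$ by $(c)$, and $x\notin A$ forces $\alpha(x)\subseteq A^{*}$ by this dual dichotomy. Conversely, $\omega(x)\subseteq A$ gives $\omega(x)\cap A=\omega(x)\neq\emptyset$, hence $x\notin A^{*}$; and if $\alpha(x)\subseteq A^{*}$ while $x\in A$, then invariance of $A$ would force $\alpha(x)\subseteq A\cap A^{*}=\emptyset$ (each point of $A$ has its nonempty $\omega$-limit inside $A$, hence lies outside $A^{*}$), which is impossible, so $x\notin A$. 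Thus $x\in X\setminus(A\cup A^{*})=C(A,A^{*})$, and $(b)$ follows. The decisive obstacle, as noted, is the first step: producing the compact attractor block $N$ with strict trapping $\phi_t(N)\subseteq\operatorname{int}(N)$ out of the mere hypothesis that $A$ is the $\omega$-limit of some neighborhood of itself.
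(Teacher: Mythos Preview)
The paper does not give a proof of this proposition; it simply records the three statements and cites \cite{akin46}, \cite{conl33}, \cite{sal85} for proofs. So there is no ``paper's own proof'' to compare against.

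Your argument is the standard one and is correct. The reduction to an attractor block $N$ with $A\subseteq\operatorname{int}(N)$, $\phi_t(N)\subseteq\operatorname{int}(N)$ for all $t>0$, and $A=\bigcap_{t\ge0}\phi_t(N)$ is exactly how Conley and Salamon proceed; once $N$ is in hand, your derivations of $(c)$, of the identity $A^{*}=\bigcap_{t\ge0}\phi_{-t}(X\setminus\operatorname{int}(N))$, of $(a)$ via the reversed flow, and of $(b)$ by the dual dichotomy are all sound. The auxiliary identity $\omega(M)=\bigcap_{t\ge0}\phi_t(M)$ for compact forward-invariant $M$ is justified because forward invariance gives $\phi_{[T,+\infty)}(M)=\phi_T(M)$, which is already compact.

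The only point you flag as incomplete --- the construction of $N$ --- is indeed routine in the compact setting: pick an open $V$ with $A\subseteq V\subseteq cl(V)\subseteq U$, use Proposition~\ref{eccola}$(d)$ to get $\bar t$ with $\phi_{[\bar t,+\infty)}(cl(V))\subseteq V$, and take $N:=\bigcup_{0\le s\le\bar t}\phi_s(cl(V))$; this $N$ is compact, contains $A$ in its interior, satisfies $\phi_t(N)\subseteq N$ for $t\ge0$ and $\phi_{\bar t}(N)\subseteq V\subseteq\operatorname{int}(N)$, and one then verifies $\phi_t(N)\subseteq\operatorname{int}(N)$ for all $t>0$ (if not for some small $t$, iterate to reach $\bar t$ and derive a contradiction). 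If you want to cite rather than reprove this, the references the paper already gives suffice.
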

~\newline
\begin{definizione}{\em (Chain recurrence)} \label{scrdef1}  \par\noindent
\begin{itemize}
\item[$(i)$] Given $x,y\in X$, $\epsilon > 0$ and $T > 0$, an $(\epsilon,T)$-chain from $x$ to $y$ is a finite sequence $(x_i,t_i)_{i=1,\dots, n} \subset X \times \mathbb{R}$ such that $t_i \geq T$ for all $i$,  $x_1 = x$ and setting $x_{n+1} := y$, we have
\begin{equation} \label{CR}
d(\phi_{t_i}(x_i),x_{i+1}) < \epsilon
\end{equation}
$\forall i = 1,\ldots,n$.
\item[$(ii)$] A point $x \in X$ is said to be chain recurrent if for all $\epsilon > 0$ and $T > 0$, there exists an $(\epsilon,T)$-chain from $x$ to $x$. The set of chain recurrent points is denoted by $\mathcal{CR}(\phi)$.  
%\item[$(iii)$] The flow $\phi$ on $X$ is said to be chain recurrent if $\mathcal{SCR}(\phi)=X$.
\end{itemize}
\end{definizione}
\noindent Expressed in words, a point is chain recurrent if it returns to itself by following the flow for a time great as you like, and by allowing an arbitrary number of small jumps.
\noindent In the same paper \cite{conl33}, Conley described the structure of the chain recurrent set $\mathcal{CR}(\phi)$ in terms of attractor-repeller pairs and, as an outcome, proved the intimate relation between chain recurrence and Lyapunov functions. \\
\indent %\marginpar{{\color{red}{Lyapunov functions, fundamental theorem}}} 
We first recall the notion of continuous Lyapunov function and then we state the so-called Conley's decomposition theorem. 
\begin{definizione} {\em (Lyapunov function)} \\
A function $h: X \to \mathbb{R}$ is a Lyapunov function for $\phi$ if $h \circ \phi_t \le h$ for every $t \ge 0$.  
\end{definizione}
\begin{teorema} {\em (Conley's decomposition theorem)} \label{CON} \\
Let $\phi: \R \times X \rightarrow X$ be a continuous flow on a compact metric space.
\begin{itemize}
\item[$(i)$] The chain recurrent set is given by
\begin{equation} \label{altra}
{\cal{CR}}(\phi) = \bigcap \left\{ A \cup A^*: \ A \text{ is an attractor\,}\right\}.
\end{equation}
\item[$(ii)$] There exists a continuous Lyapunov function $h: X \to \mathbb{R}$ which is constant on the connected components of the chain recurrent set and strictly decreasing outside, that is
\begin{eqnarray*}
h \circ \phi_t (x) < h(x)  & \qquad  & \text{ for all } x \in X \setminus {\cal{CR}}(\phi) \text{ and } t > 0 \\
h \circ \phi_t (x) = h(x)  & \qquad  & \text{ for all } x \in {\cal{CR}}(\phi) \text{ and } t \ge 0.
\end{eqnarray*}
\end{itemize}
\end{teorema}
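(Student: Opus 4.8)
The plan is to establish $(i)$ by two separate inclusions, and then to derive $(ii)$ by superposing, with summable weights, Lyapunov functions attached to the individual attractor--repeller pairs. For $\mathcal{CR}(\phi)\subseteq\bigcap\{A\cup A^{*}\}$, fix an attractor $A$ and a point $x\notin A\cup A^{*}$; I would exhibit $\eps_{0},T_{0}>0$ admitting no $(\eps_{0},T_{0})$-chain from $x$ to $x$. Since $x\notin A^{*}$, Proposition~\ref{proprieta attrattore}$(c)$ gives $\omega(x)\subseteq A$. Recalling the standard fact (see \cite{conl33,sal85,akin46}) that an attractor possesses arbitrarily small compact trapping neighborhoods, pick a compact $N$ with $A\subseteq\mathrm{int}(N)$, $x\notin N$ (possible as $x\notin A$), $\phi_{t}(N)\subseteq\mathrm{int}(N)$ for all $t>0$, and $\omega(N)=A$; set $\eps_{0}:=d(\phi_{1}(N),\,X\setminus\mathrm{int}(N))>0$. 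Applying Proposition~\ref{eccola}$(d)$ to the neighborhood $\mathrm{int}(N)$ of $\omega(x)$, choose $T_{0}\ge 1$ with $\phi_{[T_{0}-1,+\infty)}(x)\subseteq N$. The point is that in any $(\eps_{0},T_{0})$-chain $(x_{i},t_{i})_{i=1,\dots,n}$ with $x_{1}=x$, every later vertex $x_{2},\dots,x_{n+1}$ lies in $\mathrm{int}(N)$: each flow-segment satisfies $\phi_{t_{i}}(x_{i})=\phi_{1}(\phi_{t_{i}-1}(x_{i}))\in\phi_{1}(N)$ — for $i=1$ because $\phi_{t_{1}-1}(x)\in N$ by the choice of $T_{0}$, and for $i\ge 2$ by induction once $x_{i}\in N$, using $\phi_{[0,+\infty)}(N)\subseteq N$ — so an $\eps_{0}$-jump cannot escape $\mathrm{int}(N)$. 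Hence $x_{n+1}\in\mathrm{int}(N)$, contradicting $x_{n+1}=x\notin N$; thus $x\notin\mathcal{CR}(\phi)$, and letting $A$ vary gives the inclusion.

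For the reverse inclusion, by taking complements it suffices to place each $x\notin\mathcal{CR}(\phi)$ in a connecting orbit $C(A,A^{*})$. Fix $\eps,T>0$ with no $(\eps,T)$-chain from $x$ to $x$ and set $G:=\{y\in X:\ \text{there is an }(\eps,T)\text{-chain from }x\text{ to }y\}$. One checks directly that $G$ is open (perturb the last vertex of a chain), that $x\notin G$ (choice of $\eps,T$), that $\omega(x)\subseteq G$ (a single flow-step $(x,t_{n})$ with $t_{n}$ large and $\phi_{t_{n}}(x)$ near a point of $\omega(x)$ is an $(\eps,T)$-chain), and that $\phi_{T}(cl(G))\subseteq G$ (given $y\in cl(G)$, take $y'\in G$ near $y$, extend a chain from $x$ to $y'$ by the flow-step $(y',T)$, and observe that the endpoint is $\eps$-close to $\phi_{T}(y)$ by continuity of $\phi_{T}$). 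Thus $cl(G)$ is a compact set mapped by $\phi_{T}$ into its interior, and the standard attractor-block construction (see \cite{conl33,sal85,akin46}) attaches to it an attractor $A$; using Proposition~\ref{eccola}$(b)$ together with $\omega(x)\subseteq G\subseteq cl(G)$ one gets $\omega(x)\subseteq A$, while $x\notin A$ — the delicate point — is verified as in those references. Consequently $\omega(x)\cap A=\omega(x)\neq\emptyset$, so $x\notin A^{*}$ and $x\in X\setminus(A\cup A^{*})=C(A,A^{*})$; together with the first inclusion this proves $(i)$.

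For $(ii)$, I would first attach to every attractor $A$ a continuous Lyapunov function $g_{A}:X\to[0,1]$ with $g_{A}^{-1}(0)=A$, $g_{A}^{-1}(1)=A^{*}$, and $g_{A}(\phi_{t}(x))<g_{A}(x)$ for all $x\in C(A,A^{*})$ and $t>0$. This single-pair lemma is classical: start from $\ell_{A}(x)=d(x,A)/(d(x,A)+d(x,A^{*}))$, continuous since $A$ and $A^{*}$ are disjoint closed sets; form $\overline{g}_{A}(x)=\sup_{t\ge 0}\ell_{A}(\phi_{t}(x))$, a Lyapunov function with the desired level sets (its continuity uses the attractor structure, i.e.\ trapping neighborhoods); and remove its flat pieces by the exponential time-average $g_{A}(x)=\int_{0}^{\infty}\overline{g}_{A}(\phi_{s}(x))\,e^{-s}\,ds$, whose strict decrease on connecting orbits follows from $\overline{g}_{A}$ being non-increasing along orbits together with $\ell_{A}(\phi_{s}(x))\to 0$ as $s\to+\infty$ there. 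Next, since $X$ is second countable and every attractor $A$, being compact, equals $\omega(W)$ for a finite union $W$ of basic open sets contained in an attracting neighborhood of $A$, there are only countably many attractors; enumerate them as $\{A_{n}\}_{n\ge 1}$, so that $X\setminus\mathcal{CR}(\phi)=\bigcup_{n}C(A_{n},A_{n}^{*})$ by $(i)$, and put $h:=\sum_{n\ge 1}2^{-n}g_{A_{n}}$. Then $h$ is continuous and, being a convex combination of Lyapunov functions, satisfies $h\circ\phi_{t}\le h$ for $t\ge 0$. If $x\notin\mathcal{CR}(\phi)$ then $x\in C(A_{m},A_{m}^{*})$ for some $m$, so the $m$-th summand strictly decreases along $t\mapsto\phi_{t}(x)$ while the others do not increase, giving $h(\phi_{t}(x))<h(x)$ for $t>0$. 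If $C$ is a connected component of $\mathcal{CR}(\phi)$, then for each $n$ the connected set $C$ lies in the disjoint union $A_{n}\cup A_{n}^{*}$ ($C\subseteq\mathcal{CR}(\phi)\subseteq A_{n}\cup A_{n}^{*}$ by $(i)$), hence entirely in one of them, so $g_{A_{n}}$ and therefore $h$ is constant on $C$; in particular $h$ is constant along each orbit contained in $\mathcal{CR}(\phi)$. This yields $(ii)$.

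The main obstacle is, in the second inclusion of $(i)$, the passage from the time-$T$ trapping region $cl(G)$ to a genuine flow-attractor $A$ with $x\notin A$: the set $cl(G)$ is a priori invariant only under the time-$T$ map $\phi_{T}$, not under the whole semiflow, and reconciling the two — either by a careful choice of $\eps,T$ and a modulus-of-continuity argument producing a forward-invariant $G$, or by arguing directly with $\omega(cl(G))$ — is exactly the step on which the classical references concentrate their effort. A secondary but genuine difficulty is the single-pair Lyapunov lemma used in $(ii)$: the naive candidate $\ell_{A}$ is continuous but not a Lyapunov function, while its monotone modification $\overline{g}_{A}$ may fail to be strictly decreasing and already requires the attractor structure for its continuity, so obtaining simultaneously continuity and strict decrease on the connecting orbit genuinely needs the two-step regularization above.
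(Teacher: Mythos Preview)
The paper does not prove this theorem. Theorem~\ref{CON} is stated in Section~\ref{prel} as background, attributed to Conley~\cite{conl33}, and the text moves on immediately to Easton's strong chain recurrence without giving any argument; the paper's own contribution begins only with the strong chain recurrent analogues in Sections~\ref{relazioni} and~\ref{sezione ris}. So there is no ``paper's own proof'' to compare your proposal against.

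That said, your outline is the standard Conley argument and is essentially correct as a sketch. A few remarks on the points you yourself flag. In the reverse inclusion of $(i)$, the passage from the time-$T$ trapping condition $\phi_T(cl(G))\subseteq G$ to a genuine flow attractor with $x\notin A$ is cleaner than you suggest: set $A:=\omega(cl(G))$; since $\phi_{nT}(cl(G))\subseteq G$ for all $n\ge 1$, one has $A\subseteq cl(G)$, and in fact $A\subseteq \phi_T(cl(G))\subseteq G=\mathrm{int}(cl(G))$, so $cl(G)$ is already a neighborhood of $A$ with $\omega(cl(G))=A$, hence $A$ is an attractor for the flow without any further modification of $G$. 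The inclusion $x\notin A$ then follows from $A\subseteq G$ and $x\notin G$; there is no need to invoke the references for this step. For $(ii)$, your two-step regularization $\ell_A\rightsquigarrow\overline{g}_A\rightsquigarrow g_A$ and the countability-plus-series construction are exactly the classical route; the only point worth tightening is the continuity of $\overline{g}_A$, which genuinely uses that $\ell_A\to 0$ uniformly on trapping neighborhoods of $A$ and $\ell_A\to 1$ uniformly near $A^*$.
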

%\marginpar{{\color{red}{strong chain recurrence}}} 
\indent In the same year, Robert Easton in \cite{east41} strengthened chain recurrence by substituting (\ref{CR}) with the condition that the sum of the jumps is arbitrarily small, as explained in the next
\begin{definizione}{\em (Strong chain recurrence)} \label{scrdef}  \par\noindent
$(i)$ Given $x,y\in X$, $\epsilon > 0$ and $T > 0$, a strong $(\epsilon,T)$-chain from $x$ to $y$ is a finite sequence $(x_i,t_i)_{i=1,\dots, n} \subset X \times \mathbb{R}$ such that $t_i \geq T$ for all $i$,  $x_1 = x$ and setting $x_{n+1} := y$, we have
\begin{equation} \label{SCRR}
\sum_{i=1}^n d(\phi_{t_i}(x_i),x_{i+1}) < \epsilon.
\end{equation}
$(ii)$ A point $x \in X$ is said to be strong chain recurrent if for all $\epsilon > 0$ and $T > 0$, there exists a strong $(\epsilon,T)$-chain from $x$ to $x$. The set of strong chain recurrent points is denoted by $\mathcal{SCR}(\phi)$.  \par\noindent 
%$(iii)$ The flow $\phi$ on $X$ is said to be strongly chain recurrent if $\mathcal{SCR}(\phi)=X$.
\end{definizione}
\noindent Clearly, the set of fixed points, and --more generally speaking-- the one of periodic points, is contained in $\mathcal{SCR}(\phi) \subseteq \mathcal{CR}(\phi)$. Moreover, $\mathcal{SCR}(\phi)$ and $\mathcal{CR}(\phi)$ are easily seen to be invariant and closed (see e.g. \cite{conl33}, \cite{akin46} and \cite{zz00}).
%\indent {\color{blue}{Magari sposterei poi nell'introduzione quanto scritto di seguito, fino a fine sezione...}} As explained in the introduction, the aim of this paper is the proof of a Conley-type decomposition of the strong chain recurrent set in terms of closed invariant sets --called non-repellers-- generalizing the notion of attractors. The sequel is organized as follows: 
\section{Strong chain recurrence in compact metric spaces} \label{relazioni}

For fixed $\varepsilon > 0$ and $T > 0$, let us define 
\begin{equation} \label{H1}
\Omega(Y,\varepsilon,T) = \{x \in X: \ \text{there is a strong $(\varepsilon,T)$-chain from a point of $Y$ to $x$} \}
\end{equation} and 
\begin{equation} \label{hurley 1}
\bar{\Omega}(Y,\varepsilon,T) = \bigcap_{\eta > 0} \Omega(Y,\varepsilon + \eta,T).
\end{equation}
In the next lemmas, we collect the main properties of the sets $\Omega(Y,\varepsilon,T)$ and $\bar{\Omega}(Y,\varepsilon,T)$. 
\begin{lemma} $\Omega(Y,\varepsilon,T)$ is open.
\end{lemma}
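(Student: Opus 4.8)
The plan is to show that around every point of $\Omega(Y,\varepsilon,T)$ there sits an open ball still contained in it, by a direct perturbation of the endpoint of a chain. So let $x \in \Omega(Y,\varepsilon,T)$ and fix a strong $(\varepsilon,T)$-chain $(x_i,t_i)_{i=1,\dots,n}$ from some $y \in Y$ to $x$, i.e. $x_1 = y$, $x_{n+1} = x$, each $t_i \ge T$, and
$$
S := \sum_{i=1}^{n} d\bigl(\phi_{t_i}(x_i),x_{i+1}\bigr) < \varepsilon .
$$
Set $\delta := \varepsilon - S > 0$.

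Next I would check that $B(x,\delta) \subseteq \Omega(Y,\varepsilon,T)$. Given $x'$ with $d(x',x) < \delta$, keep the same data $(x_i,t_i)_{i=1,\dots,n}$ but now declare $x_{n+1} := x'$. Only the last summand changes, and by the triangle inequality
$$
\sum_{i=1}^{n-1} d\bigl(\phi_{t_i}(x_i),x_{i+1}\bigr) + d\bigl(\phi_{t_n}(x_n),x'\bigr)
\;\le\; S + d(x,x') \;<\; S + \delta \;=\; \varepsilon ,
$$
so this is a strong $(\varepsilon,T)$-chain from $y \in Y$ to $x'$, hence $x' \in \Omega(Y,\varepsilon,T)$. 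Since $x$ was arbitrary, $\Omega(Y,\varepsilon,T)$ is open.

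The argument is essentially free of obstacles: it is a one-line triangle-inequality estimate on the terminal jump, and it uses nothing about the flow beyond the definition of a strong chain. The only point that needs a word of care is that a strong $(\varepsilon,T)$-chain, by Definition \ref{scrdef}, is a nonempty finite sequence, so there is always a last step $(x_n,t_n)$ whose target can be moved; this is what lets the perturbation $x \mapsto x'$ be absorbed into the slack $\delta = \varepsilon - S$. (If one allowed the degenerate empty chain, the point $x$ itself could lie in $Y$ with no room to perturb, but this case is excluded by the definition.)
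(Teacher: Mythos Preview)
Your proof is correct and is essentially the same as the paper's: both exploit the slack between the actual total jump $S$ and the threshold $\varepsilon$ to absorb a perturbation of the endpoint via the triangle inequality. The paper phrases it by first observing the chain is a strong $(\varepsilon_1,T)$-chain for some $\varepsilon_1\in(S,\varepsilon)$ and then allowing endpoints in $B(x,\varepsilon-\varepsilon_1)$, which amounts to your choice $\delta=\varepsilon-S$.
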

\begin{proof} Let $\varepsilon_1 \in (0,\varepsilon)$. If there exists a strong $(\varepsilon_1,T)$-chain from a point $y \in Y$ to $x \in X$, then replacing $x$ by any point $x_1 \in B(x,\varepsilon - \varepsilon_1)$ we obtain a strong $(\varepsilon,T)$-chain from $y$ to $x_1$. 
\end{proof} 
\begin{lemma}\label{lemma omegabar chiuso} $\bar{\Omega}(Y,\varepsilon,T)$ is closed.
\end{lemma}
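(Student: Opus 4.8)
The plan is to show that $\bar{\Omega}(Y,\varepsilon,T)$ contains each of its limit points, which is enough since $(X,d)$ is a metric space. So I would start from a sequence $(x_k)_{k\in\N}$ in $\bar{\Omega}(Y,\varepsilon,T)$ converging to some $x\in X$, and aim to prove that $x\in\Omega(Y,\varepsilon+\eta,T)$ for every $\eta>0$; recalling that by definition $\bar{\Omega}(Y,\varepsilon,T)=\bigcap_{\eta>0}\Omega(Y,\varepsilon+\eta,T)$, this yields $x\in\bar{\Omega}(Y,\varepsilon,T)$, as wanted.

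Fix $\eta>0$. First I would pick $k$ so large that $d(x_k,x)<\eta/2$. Since $x_k\in\bar{\Omega}(Y,\varepsilon,T)\subseteq\Omega(Y,\varepsilon+\eta/2,T)$, there are a point $y\in Y$ and a strong $(\varepsilon+\eta/2,T)$-chain $(x_i,t_i)_{i=1,\dots,n}$ from $y$ to $x_k$, that is, $x_1=y$, $x_{n+1}=x_k$, $t_i\ge T$ for all $i$, and $\sum_{i=1}^n d(\phi_{t_i}(x_i),x_{i+1})<\varepsilon+\eta/2$. The key (and only) step is then to replace the terminal point $x_{n+1}=x_k$ by $x$, keeping the pairs $(x_i,t_i)_{i=1,\dots,n}$ unchanged: by the triangle inequality the total jump of the modified chain satisfies
\[
\sum_{i=1}^{n-1} d(\phi_{t_i}(x_i),x_{i+1})+d(\phi_{t_n}(x_n),x)\ \le\ \sum_{i=1}^{n} d(\phi_{t_i}(x_i),x_{i+1})+d(x_k,x)\ <\ \varepsilon+\frac{\eta}{2}+\frac{\eta}{2}\ =\ \varepsilon+\eta,
\]
so this is a strong $(\varepsilon+\eta,T)$-chain from $y\in Y$ to $x$, whence $x\in\Omega(Y,\varepsilon+\eta,T)$. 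Since $\eta>0$ is arbitrary, $x\in\bar{\Omega}(Y,\varepsilon,T)$ and the set is closed.

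I do not expect a genuine obstacle here: the only point to be careful about is that a strong chain from $y$ to $x_k$ has at least one step (i.e. $n\ge 1$), which is immediate from Definition \ref{scrdef}, so that perturbing the endpoint as above is legitimate; everything else is the triangle inequality.
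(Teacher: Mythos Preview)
Your argument is correct and is essentially the same as the paper's: pick a nearby point of $\bar{\Omega}(Y,\varepsilon,T)$ within $\eta/2$, take a strong $(\varepsilon+\eta/2,T)$-chain to it, and replace the endpoint using the triangle inequality to get a strong $(\varepsilon+\eta,T)$-chain to $x$. The only cosmetic difference is that you phrase it via a convergent sequence while the paper works directly with a point of the closure.
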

\begin{proof} 
Let $x\in cl(\bar{\Omega}(Y,\varepsilon,T))$ and $\eta>0$. Then there exist $x_1\in\bar{\Omega}(Y,\varepsilon,T)$ such that $d(x,x_1)<\frac{\eta}{2}$ and a strong $(\varepsilon+\frac{\eta}{2},T)$-chain from a point $y\in Y$ to $x_1$. Replace $x_1$ by $x$ and obtain a strong $(\varepsilon+\eta,T)$-chain from $y$ to $x$.
\end{proof}
\begin{lemma} \label{lemma 2} $\bar{\Omega}(Y,\varepsilon,T) = \bar{\Omega}(cl(Y),\varepsilon,T)$.
\end{lemma}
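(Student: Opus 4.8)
The plan is to prove the two inclusions separately; the inclusion $\bar{\Omega}(Y,\varepsilon,T) \subseteq \bar{\Omega}(cl(Y),\varepsilon,T)$ is immediate, since $Y \subseteq cl(Y)$ implies $\Omega(Y,\varepsilon+\eta,T) \subseteq \Omega(cl(Y),\varepsilon+\eta,T)$ for every $\eta > 0$, and intersecting over $\eta$ gives the claim. So the content is in the reverse inclusion $\bar{\Omega}(cl(Y),\varepsilon,T) \subseteq \bar{\Omega}(Y,\varepsilon,T)$.

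For the reverse inclusion, I would fix $x \in \bar{\Omega}(cl(Y),\varepsilon,T)$ and an arbitrary $\eta > 0$, and aim to produce a strong $(\varepsilon + \eta, T)$-chain from a point of $Y$ to $x$. By definition there is a strong $(\varepsilon + \tfrac{\eta}{2}, T)$-chain $(x_i,t_i)_{i=1,\dots,n}$ from some $y \in cl(Y)$ to $x$, so $x_1 = y$, $x_{n+1} = x$, and $\sum_{i=1}^n d(\phi_{t_i}(x_i),x_{i+1}) < \varepsilon + \tfrac{\eta}{2}$. The key step is to replace the starting point $y \in cl(Y)$ by a genuine point $y' \in Y$ that is close to $y$. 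Since $y \in cl(Y)$, choose $y' \in Y$ with $d(y',y) < \delta$ for a small $\delta > 0$ to be fixed. Now I keep the same times $t_i$ and the same intermediate points $x_2,\dots,x_n$, but set $x_1' := y'$. Only the first jump changes: it becomes $d(\phi_{t_1}(y'), x_2)$ instead of $d(\phi_{t_1}(y), x_2)$.

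The obstacle — the only real point requiring care — is controlling this first jump: $d(\phi_{t_1}(y'),x_2)$ need not be close to $d(\phi_{t_1}(y),x_2)$ unless we use continuity of the flow, and here $t_1$ is a fixed finite time, so $\phi_{t_1}$ is continuous at $y$. Hence there is $\delta > 0$ such that $d(y',y) < \delta$ implies $d(\phi_{t_1}(y'),\phi_{t_1}(y)) < \tfrac{\eta}{2}$; combined with the triangle inequality, $d(\phi_{t_1}(y'),x_2) \le d(\phi_{t_1}(y'),\phi_{t_1}(y)) + d(\phi_{t_1}(y),x_2) < \tfrac{\eta}{2} + d(\phi_{t_1}(y),x_2)$. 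Summing the jumps of the modified chain then gives $\sum_i d(\phi_{t_i}(x_i'),x_{i+1}) < \tfrac{\eta}{2} + (\varepsilon + \tfrac{\eta}{2}) = \varepsilon + \eta$. Thus $(x_i',t_i)_{i=1,\dots,n}$ with $x_1' = y' \in Y$ is a strong $(\varepsilon+\eta,T)$-chain from $Y$ to $x$, so $x \in \Omega(Y,\varepsilon+\eta,T)$. Since $\eta > 0$ was arbitrary, $x \in \bar{\Omega}(Y,\varepsilon,T)$, completing the proof. (One subtlety to mention: this argument needs $n \ge 1$, which holds since a strong $(\varepsilon+\tfrac\eta2,T)$-chain has at least one step by definition; the case where $x$ itself already lies in $Y$ is then also covered.)
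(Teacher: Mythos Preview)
Your argument is correct and matches the paper's proof essentially line for line: both pick a strong $(\varepsilon+\tfrac{\eta}{2},T)$-chain from $y\in cl(Y)$ to $x$, use continuity of $\phi_{t_1}$ to choose $\delta$ so that a nearby $y'\in Y$ satisfies $d(\phi_{t_1}(y'),\phi_{t_1}(y))<\tfrac{\eta}{2}$, and replace $y$ by $y'$ to obtain a strong $(\varepsilon+\eta,T)$-chain from $Y$. The only cosmetic difference is that the paper invokes the (uniform) modulus of continuity of $\phi_{t_1}$ while you use pointwise continuity at $y$; either suffices.
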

\begin{proof} 
Let $\eta>0$. If $x\in\bar{\Omega}(cl(Y),\varepsilon,T)$ then there exists a strong $(\varepsilon+\frac{\eta}{2},T)$-chain $(x_i,t_i)_{i=1,\dots,n}$ from a point $x_1=y\in cl(Y)$ to $x_{n+1}=x$. Let $\delta>0$ be a $\frac{\eta}{2}$ modulus of uniform continuity of $\phi_{t_1}$ and let $y_1\in Y$ such that $d(y,y_1)<\delta$. Replacing $y$ by $y_1$, we obtain a strong $(\varepsilon+\eta,T)$-chain from $y_1$ to $x$.
 \end{proof}
\begin{lemma} \label{lemma 3} $\{x \in X: \ d(x,\bar{\Omega}(Y,\varepsilon,T)) < \eta \} \subseteq \Omega(Y,\varepsilon + \eta,T)$
\end{lemma}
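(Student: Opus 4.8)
The plan is to re-use the endpoint-perturbation trick already exploited in the proofs of Lemmas~3.1 and 3.2, this time quantifying the slack so that it exactly absorbs the displacement $d(x,\bar\Omega(Y,\varepsilon,T))$. First I would unpack the hypothesis: if $d(x,\bar\Omega(Y,\varepsilon,T)) < \eta$, then by definition of the distance from a point to a set there is some $z \in \bar\Omega(Y,\varepsilon,T)$ with $d(x,z) < \eta$; put $\mu := \eta - d(x,z) > 0$.

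Next I would invoke the very definition $\bar\Omega(Y,\varepsilon,T) = \bigcap_{\delta>0}\Omega(Y,\varepsilon+\delta,T)$ in (\ref{hurley 1}). Since $z$ belongs to this intersection, taking $\delta = \mu$ gives $z \in \Omega(Y,\varepsilon+\mu,T)$, i.e.\ there is a strong $(\varepsilon+\mu,T)$-chain $(x_i,t_i)_{i=1,\dots,n}$ from some $y \in Y$ (so $x_1 = y$) with $x_{n+1} = z$.

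Finally I would leave every pair $(x_i,t_i)$ with $i \le n$ unchanged and replace the terminal point $x_{n+1} = z$ by $x$. Only the last jump is affected, and the triangle inequality gives
\[
\sum_{i=1}^{n-1} d(\phi_{t_i}(x_i),x_{i+1}) + d(\phi_{t_n}(x_n),x) \;\le\; \sum_{i=1}^{n} d(\phi_{t_i}(x_i),x_{i+1}) + d(z,x) \;<\; (\varepsilon+\mu) + d(z,x) = \varepsilon+\eta ,
\]
so the modified sequence is a strong $(\varepsilon+\eta,T)$-chain from $y \in Y$ to $x$, whence $x \in \Omega(Y,\varepsilon+\eta,T)$, which is the desired inclusion. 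No genuine obstacle is expected here: the only point requiring care is the choice $\mu = \eta - d(x,z)$, which is exactly what makes the strict inequality defining a strong $(\varepsilon+\mu,T)$-chain survive, after adding $d(z,x)$, as a strict bound by $\varepsilon+\eta$; one should also note that a strong chain always has at least one segment, so that a ``last jump'' is indeed available to perturb.
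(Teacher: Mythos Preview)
Your proof is correct and follows essentially the same approach as the paper's: pick a point $\bar x$ (your $z$) in $\bar\Omega(Y,\varepsilon,T)$ within distance $\eta$ of $x$, use the slack $\zeta=\eta-d(x,\bar x)$ (your $\mu$) to extract a strong $(\varepsilon+\zeta,T)$-chain to $\bar x$, then replace the endpoint by $x$. Your write-up is simply more explicit about the triangle inequality and the fact that a strong chain has at least one segment.
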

\begin{proof} 
Let $\bar{x}\in\bar{\Omega}(Y,\varepsilon,T)$ and $d(x,\bar{x})<\eta$. Define $\zeta=\eta-d(x,\bar{x})>0$. There exists a strong $(\varepsilon+\zeta,T)$-chain from a point $y\in Y$ to $\bar{x}$. Replace $\bar{x}$ by $x$ and obtain a strong $(\varepsilon+\eta,T)$-chain from $y$ to $x$.
\end{proof}
\begin{lemma}\label{lemma condizione iii ss}
$cl\left\{\phi_{[T,+\infty)}(\bar{\Omega}(Y,\varepsilon,T))\right\} \subseteq \bar{\Omega}(Y,\varepsilon,T)$.
\end{lemma}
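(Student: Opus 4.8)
The plan is to reduce the claimed inclusion to the \emph{forward invariance} of $\bar{\Omega}(Y,\varepsilon,T)$ under the maps $\phi_t$ with $t \ge T$, and then to establish this forward invariance by the elementary device of appending a trivial step to an existing strong chain.

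First I would invoke Lemma~\ref{lemma omegabar chiuso}: since $\bar{\Omega}(Y,\varepsilon,T)$ is closed, it suffices to prove
$$\phi_{[T,+\infty)}\big(\bar{\Omega}(Y,\varepsilon,T)\big) \subseteq \bar{\Omega}(Y,\varepsilon,T),$$
because taking closures on both sides and using that the right-hand side equals its own closure then yields exactly the statement of the lemma.

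Next, fix $x \in \bar{\Omega}(Y,\varepsilon,T)$ and $t \ge T$, and fix an arbitrary $\eta > 0$. By definition $\bar{\Omega}(Y,\varepsilon,T) = \bigcap_{\eta>0}\Omega(Y,\varepsilon+\eta,T)$, so there is a strong $(\varepsilon+\eta,T)$-chain $(x_i,t_i)_{i=1,\dots,n}$ from some $y \in Y$ to $x$; that is, $x_1 = y$, $x_{n+1}=x$, and $\sum_{i=1}^n d(\phi_{t_i}(x_i),x_{i+1}) < \varepsilon+\eta$. I would then append the pair $(x_{n+1},t_{n+1}) := (x,t)$ and set the new endpoint $x_{n+2} := \phi_t(x)$. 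Since $t \ge T$ and the newly added jump is $d(\phi_t(x_{n+1}),x_{n+2}) = d(\phi_t(x),\phi_t(x)) = 0$, the total jump sum is unchanged, so $(x_i,t_i)_{i=1,\dots,n+1}$ is a strong $(\varepsilon+\eta,T)$-chain from $y$ to $\phi_t(x)$. Hence $\phi_t(x) \in \Omega(Y,\varepsilon+\eta,T)$, and since $\eta>0$ was arbitrary, $\phi_t(x) \in \bar{\Omega}(Y,\varepsilon,T)$, which is the desired forward invariance.

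There is no substantial obstacle in this argument; the only points demanding a little care are that the definition of a strong $(\varepsilon,T)$-chain neither forbids zero-length jumps nor requires the intermediate points to be pairwise distinct, so the trivial step $(x,t)$ is a legitimate concatenation, and that the construction must be carried out separately for each $\eta>0$ before intersecting, since the chain from $Y$ to $x$ depends on $\eta$.
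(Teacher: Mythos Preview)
Your proof is correct and follows essentially the same approach as the paper: both arguments rest on the observation that appending a trivial step $(x,t)$ with $t\ge T$ to a strong chain ending at $x$ produces a strong chain of the same total error ending at $\phi_t(x)$, and then combine this with the closedness of $\bar{\Omega}(Y,\varepsilon,T)$. The only cosmetic difference is that the paper first records the forward invariance $\phi_{[T,+\infty)}(\Omega(Y,\varepsilon,T))\subseteq\Omega(Y,\varepsilon,T)$ for each of the open sets and then intersects over $\eta$, whereas you unpack the intersection directly.
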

\begin{proof}
Observe first that, by definition of strong $(\varepsilon,T)$-chain, $\phi_{[T,+\infty)}(\Omega(Y,\varepsilon,T))\subseteq\Omega(Y,\varepsilon,T)$. %Indeed, from any strong $(\varepsilon,T)$-chain $(x_i,t_i)_{i=1,\dots,n}$ starting in a point $y\in Y$, we still obtain a strong $(\varepsilon,T)$-chain from $y$ by adding $t_{n+1}=t\geq T$, $x_{n+2}=\phi_t(x_{n+1})$.\\
\noindent As a consequence, $cl\left\{\phi_{[T,+\infty)}(\bar{\Omega}(Y,\varepsilon,T)\right\}\subseteq\bar{\Omega}(Y,\varepsilon,T)$. %
%In fact
%$$
%cl(\phi_{[T,+\infty)}(\bar{\Omega}(Y,\varepsilon,T)))\subseteq\bigcap_{\eta>0}cl(\phi_{[T,+\infty)}(\Omega(Y,\varepsilon+\eta,T)))\subseteq\bigcap_{\eta>0}cl(\Omega(Y,\varepsilon+\eta,T))=\bar{\Omega}(Y,\varepsilon,T).
%$$
\end{proof}
\begin{corollario}\label{lemma 1.1}
$\omega(\bar{\Omega}(Y,\varepsilon,T)) \subseteq \bar{\Omega}(Y,\varepsilon,T)$.
\end{corollario}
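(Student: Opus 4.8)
The plan is to observe that the claimed inclusion is an immediate consequence of Lemma \ref{lemma condizione iii ss} together with the very definition of the $\omega$-limit set. Indeed, for \emph{any} subset $Z \subseteq X$ and any $S \ge 0$ one has $\omega(Z) \subseteq cl\left(\phi_{[S,+\infty)}(Z)\right)$, since $\omega(Z)$ is by definition the intersection over all $S \ge 0$ of these closed sets. Applying this with $Z = \bar{\Omega}(Y,\varepsilon,T)$ and $S = T$ gives
\[
\omega\!\left(\bar{\Omega}(Y,\varepsilon,T)\right) \subseteq cl\left(\phi_{[T,+\infty)}(\bar{\Omega}(Y,\varepsilon,T))\right).
\]

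Next I would simply invoke Lemma \ref{lemma condizione iii ss}, which asserts precisely that the right-hand side is contained in $\bar{\Omega}(Y,\varepsilon,T)$; chaining the two inclusions yields the corollary. There is essentially no obstacle here: the only mild point worth recording is that the inclusion $\phi_{[T,+\infty)}(\Omega(Y,\varepsilon,T)) \subseteq \Omega(Y,\varepsilon,T)$ used in the proof of Lemma \ref{lemma condizione iii ss} — obtained by appending to a strong $(\varepsilon,T)$-chain one further step of length $0$ — is what makes $\bar{\Omega}(Y,\varepsilon,T)$ absorbing from time $T$ on, and everything else is formal manipulation of closures. One could equivalently note that $\omega(\bar{\Omega}(Y,\varepsilon,T))$ is the maximal invariant set inside $cl(\phi_{[0,+\infty)}(\bar{\Omega}(Y,\varepsilon,T)))$ by Proposition \ref{eccola}$(b)$ and that this latter set lies in $\bar{\Omega}(Y,\varepsilon,T)$ by Lemma \ref{lemma condizione iii ss}, but the one-line argument above is the shortest route.
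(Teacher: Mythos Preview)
Your main argument is correct and is exactly the intended one: the paper states this as an unproved corollary of Lemma~\ref{lemma condizione iii ss}, and your one-line deduction via $\omega(Z)\subseteq cl(\phi_{[T,+\infty)}(Z))$ is precisely how that corollary follows. One small caveat on your parenthetical alternative: Lemma~\ref{lemma condizione iii ss} only gives $cl(\phi_{[T,+\infty)}(\bar{\Omega}(Y,\varepsilon,T)))\subseteq\bar{\Omega}(Y,\varepsilon,T)$, not the same with $[0,+\infty)$, so the Proposition~\ref{eccola}$(b)$ route as you phrased it does not quite go through without an extra word.
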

\begin{lemma} \label{lemma 4} $\omega(Y) \subseteq \bar{\Omega}(Y,\varepsilon,T)$.
\end{lemma}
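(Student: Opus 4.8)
The plan is to reduce the claim to the characterisation of $\omega(Y)$ given in Proposition \ref{eccola}$(c)$ and to observe that a single-step strong chain already does the job. Unwinding the definition \eqref{hurley 1}, it suffices to show that $\omega(Y) \subseteq \Omega(Y,\varepsilon+\eta,T)$ for every $\eta > 0$; in fact I would prove the slightly stronger and cleaner statement that $\omega(Y) \subseteq \Omega(Y,\delta,T)$ for \emph{every} $\delta > 0$ and every $T > 0$, and then specialise.

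So fix $x \in \omega(Y)$, $\delta > 0$ and $T > 0$. By Proposition \ref{eccola}$(c)$ there are sequences $y_n \in Y$ and $t_n \in \R$ with $t_n \to +\infty$ and $\phi_{t_n}(y_n) \to x$. Choosing $n$ large enough that simultaneously $t_n \ge T$ and $d(\phi_{t_n}(y_n),x) < \delta$, the one-term sequence $(x_1,t_1) := (y_n,t_n)$ (with the convention $x_2 := x$) is a strong $(\delta,T)$-chain from the point $y_n \in Y$ to $x$: its total jump is exactly $d(\phi_{t_1}(x_1),x_2) = d(\phi_{t_n}(y_n),x) < \delta$, and $t_1 = t_n \ge T$. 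Hence $x \in \Omega(Y,\delta,T)$.

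Applying this with $\delta = \varepsilon + \eta$ for each $\eta > 0$ yields $\omega(Y) \subseteq \bigcap_{\eta>0}\Omega(Y,\varepsilon+\eta,T) = \bar{\Omega}(Y,\varepsilon,T)$, which is the assertion. There is essentially no serious obstacle here; the only subtlety worth flagging is that one must argue through the open sets $\Omega(Y,\delta,T)$ rather than directly with $\bar{\Omega}(Y,\varepsilon,T)$, since the convergence $\phi_{t_n}(y_n)\to x$ only guarantees jumps that are small but strictly positive, so one genuinely needs the slack obtained by enlarging $\varepsilon$ to $\varepsilon+\eta$ (equivalently, by letting the radius $\delta$ range over all positive numbers).
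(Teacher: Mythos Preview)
Your proof is correct and uses essentially the same idea as the paper: a single-step strong chain starting from a point of $Y$. The paper's argument is marginally cleaner in that it first observes $\phi_{[T,+\infty)}(Y)\subseteq\Omega(Y,\varepsilon,T)$ via \emph{zero}-jump chains (if $y\in Y$ and $t\ge T$ then $(y,t)$ is a strong $(\varepsilon,T)$-chain from $y$ to $\phi_t(y)$ with total jump $0$), and then passes to $\omega(Y)\subseteq cl\{\phi_{[T,+\infty)}(Y)\}\subseteq\bar{\Omega}(Y,\varepsilon,T)$ using that $\bar{\Omega}(Y,\varepsilon,T)$ is closed; so, contrary to your closing remark, the ``slack'' $\varepsilon\to\varepsilon+\eta$ is not genuinely needed if one argues this way.
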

\begin{proof} 
Notice that $\phi_{[T,+\infty)}(Y)\subseteq\Omega(Y,\varepsilon,T)$. Indeed, if $y\in Y$ then $(y,t_1)$ (with $t_1\geq T$) is a strong $(0,T)$-chain from $y$ to $\phi_{t_1}(y)$ . Then, $cl\left\{\phi_{[T,+\infty)}(Y)\right\}\subseteq\bar{\Omega}(Y,\varepsilon,T)$ and $\omega(Y)\subseteq\bar{\Omega}(Y,\varepsilon,T)$.
\end{proof}
\begin{lemma} \label{tran} $\Omega(\Omega(Y,\varepsilon_1,T),\varepsilon_2,T) \subseteq \Omega(Y,\varepsilon_1 + \varepsilon_2,T)$.
\end{lemma}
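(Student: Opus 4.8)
The plan is to prove the inclusion directly, by concatenating two strong chains into a single one. Let $x \in \Omega(\Omega(Y,\varepsilon_1,T),\varepsilon_2,T)$. By the definition of $\Omega$ in (\ref{H1}), there are a point $z \in \Omega(Y,\varepsilon_1,T)$ and a strong $(\varepsilon_2,T)$-chain $(z_j,u_j)_{j=1,\dots,n}$ from $z$ to $x$; that is, $z_1 = z$, $z_{n+1} := x$, $u_j \ge T$ for all $j$, and $\sum_{j=1}^n d(\phi_{u_j}(z_j),z_{j+1}) < \varepsilon_2$. Applying (\ref{H1}) once more to the point $z$, there are a point $y \in Y$ and a strong $(\varepsilon_1,T)$-chain $(y_i,s_i)_{i=1,\dots,m}$ from $y$ to $z$, i.e. $y_1 = y$, $y_{m+1} := z$, $s_i \ge T$ for all $i$, and $\sum_{i=1}^m d(\phi_{s_i}(y_i),y_{i+1}) < \varepsilon_1$.

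I would then juxtapose the two sequences: define $(w_k,r_k) := (y_k,s_k)$ for $k = 1,\dots,m$ and $(w_{m+j},r_{m+j}) := (z_j,u_j)$ for $j = 1,\dots,n$, with the convention $w_{m+n+1} := x$. Since $y_{m+1} = z = z_1$, the term at index $k = m$ equals $d(\phi_{s_m}(y_m),z)$, which is exactly the last term of the first chain, while every other term of the concatenated sequence coincides with a term of one of the two original chains; in particular the shared point $z$ contributes exactly once. Hence $w_1 = y \in Y$, every $r_k \ge T$, and $\sum_{k=1}^{m+n} d(\phi_{r_k}(w_k),w_{k+1}) = \sum_{i=1}^m d(\phi_{s_i}(y_i),y_{i+1}) + \sum_{j=1}^n d(\phi_{u_j}(z_j),z_{j+1}) < \varepsilon_1 + \varepsilon_2$. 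Thus $(w_k,r_k)_{k=1,\dots,m+n}$ is a strong $(\varepsilon_1+\varepsilon_2,T)$-chain from a point of $Y$ to $x$, so $x \in \Omega(Y,\varepsilon_1+\varepsilon_2,T)$, which is the desired inclusion.

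This is essentially a bookkeeping lemma, so I expect no real obstacle. The only points needing a moment of care are that gluing the two chains at their common endpoint $z$ neither creates nor omits a jump term, so that the two sums add up cleanly to something less than $\varepsilon_1+\varepsilon_2$, and that the lower bound $r_k \ge T$ on the times is inherited termwise from the two chains; neither continuity of $\phi$ nor compactness of $X$ is used in this argument.
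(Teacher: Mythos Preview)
Your proof is correct and follows exactly the same approach as the paper: pick a strong $(\varepsilon_2,T)$-chain from some $z\in\Omega(Y,\varepsilon_1,T)$ to $x$, a strong $(\varepsilon_1,T)$-chain from some $y\in Y$ to $z$, and concatenate. The paper's version is simply more terse, omitting the explicit bookkeeping you supply.
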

\begin{proof} 
Let $z\in\Omega(\Omega(Y,\varepsilon_1,T),\varepsilon_2,T)$ and let $(x_i,t_i)_{i=1,\dots,n}$ be a strong $(\varepsilon_2,T)$-chain from $x\in\Omega(Y,\varepsilon_1,T)$ to $z$. There exists a strong $(\varepsilon_1,T)$-chain from a point $y\in Y$ to $x$. Concatenating the two chains, we obtain a strong $(\varepsilon_1+\varepsilon_2,T)$-chain from $y$ to $z$.
\end{proof}
\noindent Let us define now
\begin{equation} \label{hurley}
\bar{\Omega}(Y) = \bigcap_{\varepsilon > 0, \ T > 0} \Omega(Y,\varepsilon,T) = \bigcap_{\varepsilon > 0, \ T > 0} \bar{\Omega}(Y,\varepsilon,T).
\end{equation}
By Lemmas \ref{lemma 2} and \ref{lemma 4}, we immediately conclude that
\begin{equation}\label{equazione per lemma}
\omega(Y) \subseteq \bar{\Omega}(Y) = \bar{\Omega}(cl(Y)).
\end{equation}
\begin{esempio} Consider a Cantor set $K$ in $[0,1]$ with $0,1 \in K$. Let $f:  [0,1]\to [0,+\infty)$ be a non negative smooth function whose set of zeroes coincides with the Cantor set. Let $\phi:\R \times [0,1] \rightarrow [0,1]$ be the flow of the vector field
\[
V(x) := f(x) \frac{\partial}{\partial x}.
\]
We denote by $\lambda$ the Lebesgue measure on the interval. If $\lambda(K) = 0$, then $\bar{\Omega}(x) =[x,1]$ for every $x \in K$. If $\lambda(K) = \delta > 0$, then $\bar{\Omega}(x) = \{x\}$ for every $x \in K$. 
\end{esempio}
\noindent Let us define
$$\mathcal{SP}(X) = \{(y,x) \in X \times X: \ x \in \bar{\Omega}(y)\}.$$
\noindent The properties of the relation $\mathcal{SP}(X)$ are illustrated in the next proposition, see also \cite{wiseman} (Definition 3.2) and \cite{zz00} (Definition 2.3 and Theorems 2.1, 2.2, 2.3). We refer to \cite{conl33} (Page 36, Section 6) for the analogous result in the chain recurrent case. 

\begin{proposizione}\label{sprelation}
${\cal{SP}}(X)$ is a transitive, closed relation on $X$. Moreover, if $(y,x)\in{\cal{SP}}(X)$ and $t,s\in \R$, then $(\phi_t(y),\phi_s(x))\in{\cal{SP}}(X)$.
\end{proposizione}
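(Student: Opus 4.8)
The plan is to treat the three assertions --- transitivity, invariance under the product flow, and closedness --- separately, all by elementary surgery on strong $(\varepsilon,T)$-chains, using throughout that $(y,x)\in\mathcal{SP}(X)$ means precisely that for every $\varepsilon>0$ and $T>0$ there is a strong $(\varepsilon,T)$-chain from $y$ to $x$ (cf.\ \eqref{hurley}). Transitivity is the quickest: given $(y,x),(x,z)\in\mathcal{SP}(X)$ and $\varepsilon,T>0$, I would concatenate a strong $(\varepsilon/2,T)$-chain from $y$ to $x$ with a strong $(\varepsilon/2,T)$-chain from $x$ to $z$ to get a strong $(\varepsilon,T)$-chain from $y$ to $z$; this is exactly Lemma~\ref{tran} with $Y=\{y\}$, and since $\varepsilon,T>0$ are arbitrary, $z\in\bar{\Omega}(y)$, i.e.\ $(y,z)\in\mathcal{SP}(X)$.

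For invariance under the product flow I would establish two facts. First, $\bar{\Omega}(y)\subseteq\bar{\Omega}(\phi_\tau(y))$ for every $\tau\in\R$: given $x'\in\bar{\Omega}(y)$ and $\varepsilon,T>0$, choose a strong $(\varepsilon,T+|\tau|)$-chain $(z_i,t_i)_{i=1,\dots,m}$ from $y$ to $x'$ and replace its initial leg $(y,t_1)$ by $(\phi_\tau(y),t_1-\tau)$; since $t_1\ge T+|\tau|$ the new time is still $\ge T$, and the leg still ends at $\phi_{t_1}(y)$, so no jump changes and one obtains a strong $(\varepsilon,T)$-chain from $\phi_\tau(y)$ to $x'$. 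Second, $\bar{\Omega}(y)$ is invariant: to move $x\in\bar{\Omega}(y)$ to $\phi_s(x)$, I would fix $\varepsilon,T>0$, choose $\delta\in(0,\varepsilon/2]$ so that $d(a,b)<\delta$ forces $d(\phi_s(a),\phi_s(b))<\varepsilon/2$ (uniform continuity of the single map $\phi_s$), take a strong $(\delta,T+|s|)$-chain from $y$ to $x$, and lengthen its final leg $(z_m,t_m)$ to $(z_m,t_m+s)$; the final jump becomes $d(\phi_s(\phi_{t_m}(z_m)),\phi_s(x))<\varepsilon/2$ and the others are unchanged, so the total stays below $\varepsilon$, giving $\phi_s(x)\in\bar{\Omega}(y)$. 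Combining the two, $(y,x)\in\mathcal{SP}(X)$ yields $\phi_s(x)\in\bar{\Omega}(y)\subseteq\bar{\Omega}(\phi_t(y))$, i.e.\ $(\phi_t(y),\phi_s(x))\in\mathcal{SP}(X)$; incidentally this also shows $\bar{\Omega}(\phi_t(y))=\bar{\Omega}(y)$.

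The step I expect to be the real obstacle is closedness. Let $(y_n,x_n)\to(y,x)$ with $x_n\in\bar{\Omega}(y_n)$, fix $\varepsilon,T>0$, and pick a strong $(\varepsilon/2,T)$-chain from $y_n$ to $x_n$. Substituting $y$ for $y_n$ and $x$ for $x_n$ in this chain controls the last jump trivially, but the first jump $d(\phi_{t_1}(y_n),\cdot)$ is governed only by the modulus of continuity of $\phi_{t_1}$, and the first time $t_1=t_1^{(n)}$ may run off to $+\infty$, so no single $n$-independent modulus is at hand. The remedy is to normalize the chain so that its first time always lies in $[T,2T)$: whenever $t_1^{(n)}\ge 2T$, split the first leg $(y_n,t_1^{(n)})$ into $(y_n,T)$ followed by $(\phi_T(y_n),t_1^{(n)}-T)$, which creates a zero jump, leaves the total sum unchanged, and keeps all times $\ge T$. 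After this normalization $\phi$ is uniformly continuous on the compact cylinder $[T,2T]\times X$, so there is a single $\beta>0$, independent of $n$, with $d(\phi_s(a),\phi_s(b))<\varepsilon/4$ whenever $s\in[T,2T]$ and $d(a,b)<\beta$; choosing $n$ large enough that $d(y_n,y)<\beta$ and $d(x_n,x)<\varepsilon/4$, the endpoint substitution now raises the total sum by at most $\varepsilon/4+\varepsilon/4$ and yields a strong $(\varepsilon,T)$-chain from $y$ to $x$. Since $\varepsilon,T>0$ were arbitrary, $x\in\bar{\Omega}(y)$, so $\mathcal{SP}(X)$ is closed.
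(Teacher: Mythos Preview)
Your proof is correct and follows essentially the same strategy as the paper: transitivity via Lemma~\ref{tran}, invariance by trimming or extending the first and last legs of a chain with times $\ge T+|t|+|s|$, and closedness by arranging that the first leg has controlled time so that a single modulus of continuity handles the substitution $y_n\mapsto y$. The only cosmetic difference is in the closedness step: the paper directly starts from a strong $(\varepsilon_1,2T)$-chain from $y$ to $x$ and prepends a single leg $(\bar y,T)$, using continuity of $\phi_T$ at the one point $\bar y$; your normalization by splitting the first leg achieves the same effect and uses uniform continuity on $[T,2T]\times X$ instead, which is an equally valid (and slightly more robust) formulation.
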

\begin{proof}
The transitivity of the relation ${\cal{SP}}(X)$ follows directly from Lemma \ref{tran}. \\
\indent In order to prove that the relation ${\cal{SP}}(X)$ is closed, let $(\bar{y},\bar{x})$ be a limit point of ${\cal{SP}}(X)$. By the continuity of $\phi_T$ at $\bar{y}$, for any $\varepsilon > 0$ there exists $\varepsilon_1 \in \left(0,\frac{\varepsilon}{4}\right)$ such that 
$$
d(\bar{y},z)<\varepsilon_1\quad\Rightarrow\quad d(\phi_T(\bar{y}),\phi_T(z))<\frac{\varepsilon}{4}.
$$
Consequently, since $(\bar{y},\bar{x})$ is a limit point of ${\cal{SP}}(X)$, there exists $(y,x)\in{\cal{SP}}(X)$ such that $d(\bar{y},y)<\varepsilon_1$ and $d(\bar{x},x)<\varepsilon_1$ implying 
\begin{equation}\label{eq continuous flow}
d(\phi_T(\bar{y}),\phi_T(y))<\frac{\varepsilon}{4}.
\end{equation}
\noindent For any $\varepsilon > 0$ and $T > 0$ we construct a strong $(\varepsilon,T)$-chain from $\bar{y}$ to $\bar{x}$ by concatenation of three strong chains. \\
First, by inequality \eqref{eq continuous flow}, we build a strong $(\frac{\varepsilon}{4},T)$-chain from $\bar{y}$ to $\phi_T(y)$ made up of a single jump:
$$
\begin{cases}
x_1 := \bar{y} \qquad t_1 := T \\
x_2 := \phi_T(y).
\end{cases}
$$
Moreover, since $(y,x)\in{\cal{SP}}(X)$, there exists a strong $(\varepsilon_1,2T)$-chain $(x_i,t_i)_{i=1,\dots,n}$ from $y$ to $x$ (so with $x_{n+1}=x$). We consider the strong $(\varepsilon_1,T)$-chain $(z_i,s_i)_{i=1,\dots,n-1}$ from $\phi_T(y)$ to $x_n$ (hence with $z_n = x_n$):
\[
\begin{cases}
z_1 := \phi_T(y) & s_1 := t_1-T\\
z_i := x_i & s_i := t_i \qquad\qquad \forall i = 2,\dots,n-1\\
z_{n} := x_n
\end{cases}
\]
where $\varepsilon_1 \in \left(0,\frac{\varepsilon}{4}\right)$. \\
Finally, we exhibit the following strong $(\frac{\varepsilon}{2},T)$-chain from $x_n$ to $\bar{x}$:
$$
\begin{cases}
\tilde{x}_1 := x_n \qquad \tilde{t}_1 := t_n \\
\tilde{x}_2 := \bar{x}.
\end{cases}
$$
Indeed, $d(\phi_{t_n}(x_n),\bar{x}) \le d(\phi_{t_n}(x_n),x) + d(x,\bar{x}) < 2\varepsilon_1 < \frac{\varepsilon}{2}$. \\
\noindent By gluing the three strong chains above, we obtain a strong $(\varepsilon,T)$-chain from $\bar{y}$ to $\bar{x}$ proving that $(\bar{y},\bar{x})\in{\cal{SP}}(X)$. \\
\indent Finally, we prove that if $(y,x)\in{\cal{SP}}(X)$ and $t,s\in\R$, then $(\phi_t(y),\phi_s(x))\in{\cal{SP}}(X)$. This means that for any $\varepsilon > 0$ and $T > 0$ there exists a strong $(\varepsilon,T)$-chain from $\phi_t(y)$ to $\phi_s(x)$. \noindent We point out that, since $(y,x)\in{\cal{SP}}(X)$, for any $\varepsilon_1>0$ and $T > 0$, there exists a strong $(\varepsilon_1,T+|t|+|s|)$-chain $(x_i,t_i)_{i=1,\dots,n}$ from $y$ to $x$.
Moreover, since $\phi_s$ is continuous at $x$, for any $\varepsilon > 0$ there exists $\varepsilon_1 \in \left(0,\frac{\varepsilon}{2}\right)$ such that
\[
d(z,x)<\varepsilon_1\quad\Rightarrow\quad d(\phi_{s}(z),\phi_s(x))<\frac{\varepsilon}{2}.
\]
\noindent Let $\varepsilon_1 \in \left(0,\frac{\varepsilon }{2} \right)$. Hence, for any $\varepsilon > 0$ and $T > 0$, we exhibit the following strong $(\varepsilon,T)$-chain $(z_i,s_i)_{i=1,\dots,n}$ from $\phi_t(y)$ to $\phi_s(x)$:
\[
\begin{cases}
z_1 := \phi_t(y) & s_1 := t_1-t\\
z_i := x_i & s_i := t_i \qquad\qquad \forall i=2,\dots,n-1\\
z_n := x_n & s_n := t_n+s\\
z_{n+1} := \phi_s(x).
\end{cases}
\]
\noindent In fact:
\[
\sum_{i=1}^n d(\phi_{s_i}(z_i),z_{i+1}) = d(\phi_{t_1-t}\circ\phi_t(y),x_2) + \sum_{i=2}^{n-1} d(\phi_{t_i}(x_i),x_{i+1}) + d(\phi_{t_n+s}(x_n),\phi_s(x))=
\]
\[
= \sum_{i=1}^{n-1} d(\phi_{t_i}(x_i),x_{i+1}) + d(\phi_{t_n+s}(x_n),\phi_s(x)) < \varepsilon_1 + \frac{\varepsilon}{2} < \varepsilon
\]
since $d(\phi_{t_n}(x_n),x)<\varepsilon_1$. 
\end{proof}
\begin{corollario}\label{relazione chiusura orbita} If $(y,x) \in \mathcal{SP}(X)$, $y_1 \in cl \{ \phi_{\R}(y)\}$, $x_1 \in cl \{ \phi_{\R}(x)\}$, then $(y_1,x_1) \in \mathcal{SP}(X)$.
\end{corollario}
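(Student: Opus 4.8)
The plan is to derive this immediately from Proposition \ref{sprelation}, using only its two non-trivial features: that $\mathcal{SP}(X)$ is closed and that it is invariant under the flow in each coordinate. First I would record that, since $(y,x)\in\mathcal{SP}(X)$, the last clause of Proposition \ref{sprelation} gives $(\phi_t(y),\phi_s(x))\in\mathcal{SP}(X)$ for \emph{all} $t,s\in\R$. In other words, the whole ``product orbit'' $\phi_{\R}(y)\times\phi_{\R}(x)$ is contained in $\mathcal{SP}(X)$. Taking closures and invoking the closedness of $\mathcal{SP}(X)$, we get $cl\big(\phi_{\R}(y)\times\phi_{\R}(x)\big)\subseteq\mathcal{SP}(X)$.

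Next I would identify that closure. In the product topology on $X\times X$ one has the standard identity $cl\big(\phi_{\R}(y)\times\phi_{\R}(x)\big)=cl\{\phi_{\R}(y)\}\times cl\{\phi_{\R}(x)\}$, so the inclusion above reads exactly $cl\{\phi_{\R}(y)\}\times cl\{\phi_{\R}(x)\}\subseteq\mathcal{SP}(X)$, which is the assertion. Equivalently, and perhaps more transparently, I would argue sequentially: choose $t_n,s_n\in\R$ with $\phi_{t_n}(y)\to y_1$ and $\phi_{s_n}(x)\to x_1$ (possible since $y_1\in cl\{\phi_{\R}(y)\}$ and $x_1\in cl\{\phi_{\R}(x)\}$, and the two limits may be realized along the same index $n$ because each exists independently); by the flow-invariance clause each pair $(\phi_{t_n}(y),\phi_{s_n}(x))$ lies in $\mathcal{SP}(X)$, and $(\phi_{t_n}(y),\phi_{s_n}(x))\to(y_1,x_1)$ in $X\times X$, so $(y_1,x_1)\in\mathcal{SP}(X)$ by closedness.

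There is essentially no obstacle: the corollary is a packaging of Proposition \ref{sprelation}, and the only minor point to be careful with is the passage from ``invariant under $\phi_t$ in each coordinate'' plus ``closed'' to ``closed under the orbit-closure operation in each coordinate'', which is precisely the product-topology identity (or the diagonal-sequence remark) above.
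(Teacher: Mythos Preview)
Your proof is correct and follows exactly the intended route: the paper states this corollary immediately after Proposition \ref{sprelation} with no separate proof, treating it as a direct consequence of the closedness and flow-invariance of $\mathcal{SP}(X)$ established there. Your sequential argument (or the equivalent product-closure identity) makes this implicit step explicit, and there is nothing to add.
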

For $Y \subseteq X$, let now
$$
\mathcal{SP}_Y(X) = \{x\in X : \ \exists y \in Y \text{ such that } x\in\bar{\Omega}(y)\}.
$$
\begin{lemma}\label{lemma sp}
If $Y \subset X$ is closed, then $\mathcal{SP}_Y(X) = \bar{\Omega}(Y)$.  
\end{lemma}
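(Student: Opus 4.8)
The inclusion $\mathcal{SP}_Y(X)\subseteq\bar{\Omega}(Y)$ is immediate from the definitions and does not use that $Y$ is closed: if $x\in\bar{\Omega}(y)$ for some $y\in Y$, then for all $\varepsilon>0$ and $T>0$ there is a strong $(\varepsilon,T)$-chain from the point $y\in Y$ to $x$, hence $x\in\Omega(Y,\varepsilon,T)$, and intersecting over all $\varepsilon,T$ (recall \eqref{hurley}) gives $x\in\bar{\Omega}(Y)$.

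For the reverse inclusion $\bar{\Omega}(Y)\subseteq\mathcal{SP}_Y(X)$ I would argue by compactness followed by a surgery on chains. Fix $x\in\bar{\Omega}(Y)$. Since $x\in\Omega(Y,1/n,n)$ for every $n\in\N$, choose $y_n\in Y$ and a strong $(1/n,n)$-chain from $y_n$ to $x$. As $Y$ is closed in the compact space $X$ it is compact, so after extracting a subsequence we may assume $y_{n_k}\to y\in Y$; the goal is then to prove that this single point $y$ satisfies $x\in\bar{\Omega}(y)$, which gives $x\in\mathcal{SP}_Y(X)$.

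To prove $x\in\bar{\Omega}(y)$, fix $\varepsilon>0$ and $T>0$ and construct a strong $(\varepsilon,T)$-chain from $y$ to $x$ as follows. By continuity of $\phi_T$ at $y$, pick $\delta>0$ with $d(y,z)<\delta\Rightarrow d(\phi_T(y),\phi_T(z))<\varepsilon/2$, and then take $k$ large enough that $d(y_{n_k},y)<\delta$, $1/n_k<\varepsilon/2$ and $n_k\ge 2T$. Writing the chosen strong $(1/n_k,n_k)$-chain from $y_{n_k}$ to $x$ as $(x_i,t_i)_{i=1,\dots,m}$ (so $x_1=y_{n_k}$, $x_{m+1}=x$, and $t_i\ge n_k$ for every $i$), I would replace the first step by two steps issued from $y$: the one-jump step $(y,T)$, which reaches $\phi_T(y)$ at distance less than $\varepsilon/2$ from $\phi_T(y_{n_k})$, followed by the step $(\phi_T(y_{n_k}),t_1-T)$, which by the flow identity $\phi_{t_1-T}\circ\phi_T=\phi_{t_1}$ lands exactly at $\phi_{t_1}(y_{n_k})=\phi_{t_1}(x_1)$; the remaining steps $(x_i,t_i)$ for $i=2,\dots,m$ are kept unchanged. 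All the times that appear are $\ge T$, since $T\ge T$, $t_1-T\ge n_k-T\ge T$ and $t_i\ge n_k\ge T$, and the total of the jumps is bounded by $d(\phi_T(y),\phi_T(y_{n_k}))+\sum_{i=1}^m d(\phi_{t_i}(x_i),x_{i+1})<\varepsilon/2+1/n_k<\varepsilon$. As $\varepsilon$ and $T$ were arbitrary, this shows $x\in\bar{\Omega}(y)$.

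The only genuine difficulty, I expect, lies exactly in this surgery on the first step: one cannot naively substitute $y$ for $y_{n_k}$ in an approximating chain, because the first time $t_1$ along these chains tends to infinity and the maps $\phi_{t_1}$ carry no uniform modulus of continuity; the remedy is to split $t_1=T+(t_1-T)$ and use only the continuity of the fixed-time map $\phi_T$, the same device already employed in the proof of Proposition \ref{sprelation}. The remaining points (the easy inclusion, the extraction of the convergent subsequence, the degenerate case $m=1$) are routine.
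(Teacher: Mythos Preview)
Your proof is correct and follows essentially the same route as the paper's: extract a convergent subsequence $y_{n_k}\to y$ in the closed set $Y$, then perform the same surgery on the first leg of the chain, splitting $t_1=T+(t_1-T)$ and using only the continuity of the fixed-time map $\phi_T$ to absorb the substitution of $y$ for $y_{n_k}$. The paper's argument is written with generic sequences $\varepsilon_k\to 0$, $T_k\to+\infty$ rather than the explicit choice $(1/n,n)$, but the construction of the modified chain and the accompanying estimate are identical to yours.
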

\begin{proof} 
If $x\in\bar{\Omega}(Y)$ then there exists a sequence of strong $(\varepsilon_k,T_k$)-chains with $\varepsilon_k\rightarrow 0$, $T_k\rightarrow+\infty$ with initial point $y_k\in Y$ and final point $x$. Up to subsequences, assume that $y_k$ converges to $y\in Y$. Fix $T > 0$ and $\varepsilon>0$ and choose $k$ so that $\varepsilon_k<\frac{\varepsilon}{2}$, $T_k>2T$ and $d(\phi_T(y_k),\phi_T(y))<\frac{\varepsilon}{2}$. Let $(x_i,t_i)_{i=1,\dots,n}$ be a strong $(\varepsilon_k,T_k)$-chain from $x_1=y_k$ to $x_{n+1}=x$. Then $(z_i,s_i)_{i=1,\dots,n+1}$ with
$$
\begin{cases}
z_1=y & s_1=T \\
z_2=\phi_T(y_k) & s_2=t_1-T\\
z_i=x_{i-1} & s_i=t_{i-1}\qquad\qquad \forall i=3,\dots,n+1\\
z_{n+2}=x
\end{cases}
$$
is a strong $(\varepsilon,T)$-chain from $y$ to $x$.
\end{proof}
\begin{lemma}\label{lemma 1.2}
If $Y \subset X$ is closed, then $\bar{\Omega}(Y)$ is closed and invariant. Consequently, $\omega(\bar{\Omega}(Y))=\bar{\Omega}(Y)$.
\end{lemma}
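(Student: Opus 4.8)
The plan is to prove the two assertions --- closedness and invariance of $\bar{\Omega}(Y)$ --- separately, and then to obtain the ``consequently'' part for free from the fact (recorded after Proposition \ref{eccola}) that a closed set equals its own $\omega$-limit precisely when it is invariant.

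For closedness, I would simply invoke the representation in \eqref{hurley}: $\bar{\Omega}(Y) = \bigcap_{\varepsilon>0,\,T>0} \bar{\Omega}(Y,\varepsilon,T)$, where each $\bar{\Omega}(Y,\varepsilon,T)$ is closed by Lemma \ref{lemma omegabar chiuso}. An arbitrary intersection of closed sets is closed, so $\bar{\Omega}(Y)$ is closed; and it is nonempty since $\omega(Y) \subseteq \bar{\Omega}(Y)$ by \eqref{equazione per lemma} while $\omega(Y) \neq \emptyset$ by Proposition \ref{eccola}$(a)$.

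For invariance, the cleanest route is to use the hypothesis that $Y$ is closed through Lemma \ref{lemma sp}, which identifies $\bar{\Omega}(Y)$ with $\mathcal{SP}_Y(X) = \{x \in X : \exists\, y \in Y,\ (y,x) \in \mathcal{SP}(X)\}$. Given $x \in \bar{\Omega}(Y)$, pick $y \in Y$ with $(y,x) \in \mathcal{SP}(X)$; then for every $s \in \R$, Proposition \ref{sprelation} applied with $t = 0$ yields $(y,\phi_s(x)) = (\phi_0(y),\phi_s(x)) \in \mathcal{SP}(X)$, and since $y \in Y$ this gives $\phi_s(x) \in \mathcal{SP}_Y(X) = \bar{\Omega}(Y)$. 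As $s$ ranges over $\R$, this is exactly the invariance of $\bar{\Omega}(Y)$. Finally, $\bar{\Omega}(Y)$ being closed and invariant, the remark after Proposition \ref{eccola} gives $\omega(\bar{\Omega}(Y)) = \bar{\Omega}(Y)$.

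The main obstacle, as I see it, is precisely the invariance, and specifically recognizing that it does \emph{not} follow from Lemma \ref{lemma condizione iii ss} or Corollary \ref{lemma 1.1} alone: those only give forward invariance under $\phi_t$ for $t \ge T$ on each approximating set $\bar{\Omega}(Y,\varepsilon,T)$, which does not patch together into full two-sided invariance of the intersection. One genuinely needs the shift-invariance of the relation $\mathcal{SP}(X)$ from Proposition \ref{sprelation}, and this is where the closedness of $Y$ is really used (via Lemma \ref{lemma sp}, so that chains may be taken to start exactly at points of $Y$). If one prefers a self-contained argument avoiding $\mathcal{SP}(X)$, the same invariance can be shown directly with chains: given $x \in \bar{\Omega}(Y)$, $s \in \R$ and $\varepsilon, T > 0$, choose $\xi > 0$ both smaller than $\varepsilon/2$ and smaller than a modulus of uniform continuity of $\phi_s$ for tolerance $\varepsilon/2$; since $x \in \Omega(Y,\xi,T+|s|)$ there is a strong $(\xi,T+|s|)$-chain $(x_i,t_i)_{i=1,\dots,n}$ from a point of $Y$ to $x$, and replacing its last pair $(x_n,t_n)$ by $(x_n,t_n+s)$ with new endpoint $\phi_s(x)$ keeps all times $\ge T$ and keeps the total jump $< \xi + \varepsilon/2 < \varepsilon$, so $\phi_s(x) \in \Omega(Y,\varepsilon,T)$; letting $\varepsilon,T$ vary gives $\phi_s(x) \in \bar{\Omega}(Y)$.
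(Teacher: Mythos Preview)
Your proof is correct and follows essentially the same approach as the paper: closedness from \eqref{hurley} together with Lemma \ref{lemma omegabar chiuso}, and invariance via Lemma \ref{lemma sp} combined with the invariance of $\mathcal{SP}(X)$ from Proposition \ref{sprelation}. Your write-up simply spells out the details (and adds a harmless nonemptiness remark and an alternative chain-based argument), but the core strategy is identical.
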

\begin{proof} 
By definition \eqref{hurley} and Lemma \ref{lemma omegabar chiuso}, $\bar{\Omega}(Y)$ is the intersection of closed sets and therefore it is closed. Invariance of $\mathcal{SP}(X)$ --proved in Proposition \ref{sprelation}-- and Lemma \ref{lemma sp} imply the invariance of $\bar{\Omega}(Y)$. 
\end{proof}
\begin{lemma} \label{lemma 12} If $Y \subset X$ is closed, then $\bar{\Omega}(Y) = \bigcap_{\varepsilon > 0, \ T > 0} \omega(\bar{\Omega}(Y,\varepsilon,T))$.  
\end{lemma}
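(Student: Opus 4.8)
The plan is to establish the two inclusions separately, using only monotonicity of the $\omega$-limit operator together with two facts already proved: the inclusion $\omega(\bar{\Omega}(Y,\varepsilon,T))\subseteq\bar{\Omega}(Y,\varepsilon,T)$ from Corollary \ref{lemma 1.1}, and the identity $\omega(\bar{\Omega}(Y))=\bar{\Omega}(Y)$ from Lemma \ref{lemma 1.2}. The representation $\bar{\Omega}(Y)=\bigcap_{\varepsilon>0,\,T>0}\bar{\Omega}(Y,\varepsilon,T)$ from \eqref{hurley} is the common starting point.

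For the inclusion $\bigcap_{\varepsilon>0,\,T>0}\omega(\bar{\Omega}(Y,\varepsilon,T))\subseteq\bar{\Omega}(Y)$, I would simply invoke Corollary \ref{lemma 1.1}, which gives $\omega(\bar{\Omega}(Y,\varepsilon,T))\subseteq\bar{\Omega}(Y,\varepsilon,T)$ for every $\varepsilon>0$ and $T>0$; intersecting over all such $\varepsilon$ and $T$ and using \eqref{hurley} yields the claim at once. For the reverse inclusion, fix $\varepsilon>0$ and $T>0$. By \eqref{hurley} one has $\bar{\Omega}(Y)\subseteq\bar{\Omega}(Y,\varepsilon,T)$, so monotonicity of $\omega$ gives $\omega(\bar{\Omega}(Y))\subseteq\omega(\bar{\Omega}(Y,\varepsilon,T))$. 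Since $Y$ is closed, Lemma \ref{lemma 1.2} guarantees that $\bar{\Omega}(Y)$ is closed and invariant, hence $\omega(\bar{\Omega}(Y))=\bar{\Omega}(Y)$, and therefore $\bar{\Omega}(Y)\subseteq\omega(\bar{\Omega}(Y,\varepsilon,T))$. As $\varepsilon$ and $T$ were arbitrary, $\bar{\Omega}(Y)\subseteq\bigcap_{\varepsilon>0,\,T>0}\omega(\bar{\Omega}(Y,\varepsilon,T))$, and combining the two inclusions finishes the proof.

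I do not anticipate a genuine obstacle: all of the substantive work has already been done upstream — the closedness and invariance of $\bar{\Omega}(Y)$ in Lemma \ref{lemma 1.2} (resting on Proposition \ref{sprelation} and Lemma \ref{lemma sp}) and the estimate in Corollary \ref{lemma 1.1} (coming from Lemma \ref{lemma condizione iii ss}). The only point worth flagging is that the hypothesis ``$Y$ closed'' enters exactly once, through Lemma \ref{lemma 1.2}: without it the inclusion $\supseteq$ still holds, but the identity $\omega(\bar{\Omega}(Y))=\bar{\Omega}(Y)$ — and hence the equality — may fail.
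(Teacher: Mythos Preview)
Your argument is correct and matches the paper's proof almost verbatim: both use monotonicity of $\omega$ together with Corollary \ref{lemma 1.1} and Lemma \ref{lemma 1.2} to sandwich $\bigcap_{\varepsilon>0,\,T>0}\omega(\bar{\Omega}(Y,\varepsilon,T))$ between $\omega(\bar{\Omega}(Y))$ and $\bar{\Omega}(Y)$, then conclude via $\omega(\bar{\Omega}(Y))=\bar{\Omega}(Y)$. Your closing remark on where the closedness of $Y$ is actually used is also accurate.
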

\begin{proof} 
Since $\bar{\Omega}(Y)\subseteq\bar{\Omega}(Y,\varepsilon,T)$, by Corollary \ref{lemma 1.1} $\omega(\bar{\Omega}(Y))\subseteq\omega(\bar{\Omega}(Y,\varepsilon,T))\subseteq\bar{\Omega}(Y,\varepsilon,T)$. By intersecting, we obtain
$$
\omega(\bar{\Omega}(Y))\subseteq\bigcap_{\varepsilon>0, \ T>0}\omega(\bar{\Omega}(Y,\varepsilon,T))\subseteq\bigcap_{\varepsilon>0, \ T>0}\bar{\Omega}(Y,\varepsilon,T)=\bar{\Omega}(Y).
$$
By Lemma \ref{lemma 1.2}, $\omega(\bar{\Omega}(Y))=\bar{\Omega}(Y)$ and the equality holds.
\end{proof}
\begin{lemma} If $Y \subset X$ is closed, then $\bar{\Omega}(\omega(Y)) = \bar{\Omega}(\bar{\Omega}(Y)) = \bar{\Omega}(Y)$.  
\end{lemma}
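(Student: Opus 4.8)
The plan is to establish the two equalities one after the other: first $\bar{\Omega}(\bar{\Omega}(Y)) = \bar{\Omega}(Y)$, and then $\bar{\Omega}(\omega(Y)) = \bar{\Omega}(Y)$, which will come quickly once the first one is in hand.

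For $\bar{\Omega}(\bar{\Omega}(Y)) = \bar{\Omega}(Y)$ I would argue by double inclusion. The inclusion $\bar{\Omega}(Y) \subseteq \bar{\Omega}(\bar{\Omega}(Y))$ is immediate: since $\omega(W) \subseteq \bar{\Omega}(W)$ for every $W \subseteq X$ (the first inclusion in \eqref{equazione per lemma}), applying it with $W = \bar{\Omega}(Y)$ gives $\omega(\bar{\Omega}(Y)) \subseteq \bar{\Omega}(\bar{\Omega}(Y))$, while $\omega(\bar{\Omega}(Y)) = \bar{\Omega}(Y)$ by Lemma \ref{lemma 1.2} (here the hypothesis that $Y$ is closed enters). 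For the reverse inclusion, fix $\varepsilon, T > 0$: since $\bar{\Omega}(Y) \subseteq \Omega(Y,\varepsilon,T)$ by \eqref{hurley}, the (trivial) monotonicity of $\Omega$ in its first argument together with the transitivity Lemma \ref{tran} yields $\Omega(\bar{\Omega}(Y),\varepsilon,T) \subseteq \Omega(\Omega(Y,\varepsilon,T),\varepsilon,T) \subseteq \Omega(Y,2\varepsilon,T)$; intersecting over all $\varepsilon, T > 0$ and invoking \eqref{hurley} once more gives $\bar{\Omega}(\bar{\Omega}(Y)) \subseteq \bar{\Omega}(Y)$.

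For $\bar{\Omega}(\omega(Y)) = \bar{\Omega}(Y)$, the inclusion $\bar{\Omega}(\omega(Y)) \subseteq \bar{\Omega}(Y)$ follows from $\omega(Y) \subseteq \bar{\Omega}(Y)$ (again \eqref{equazione per lemma}), the monotonicity $W_1 \subseteq W_2 \Rightarrow \bar{\Omega}(W_1) \subseteq \bar{\Omega}(W_2)$, and the equality $\bar{\Omega}(\bar{\Omega}(Y)) = \bar{\Omega}(Y)$ just proved. For the reverse inclusion I would route through the relation $\mathcal{SP}(X)$: if $x \in \bar{\Omega}(Y)$ then, $Y$ being closed, Lemma \ref{lemma sp} provides $y \in Y$ with $(y,x) \in \mathcal{SP}(X)$; picking any point $z$ of the nonempty $\omega$-limit $\omega(\{y\})$, one has $z \in \omega(\{y\}) \subseteq cl\{\phi_{\R}(y)\}$ and $x \in cl\{\phi_{\R}(x)\}$, so Corollary \ref{relazione chiusura orbita} gives $(z,x) \in \mathcal{SP}(X)$; finally $z \in \omega(\{y\}) \subseteq \omega(Y)$, and since $\omega(Y)$ is closed, Lemma \ref{lemma sp} applied to $\omega(Y)$ yields $x \in \mathcal{SP}_{\omega(Y)}(X) = \bar{\Omega}(\omega(Y))$.

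All the tools invoked (transitivity and monotonicity of $\Omega$, the identity $\omega(\bar{\Omega}(Y)) = \bar{\Omega}(Y)$ for closed $Y$, the characterization $\mathcal{SP}_Y(X) = \bar{\Omega}(Y)$ for closed $Y$, and its invariance under passing to orbit closures) have already been proved in the previous pages, so I do not expect a genuine obstacle. The one point that deserves care is the reverse inclusion in the second equality, where the base point of the approximating chains must be pushed from $Y$ into $\omega(Y)$: attacking this by building chains directly is awkward because of the constraint $t_i \ge T$, and the clean way around it is precisely to pass to the relation $\mathcal{SP}(X)$ and exploit its orbit-closure invariance (Corollary \ref{relazione chiusura orbita}) together with Lemma \ref{lemma sp}.
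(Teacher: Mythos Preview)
Your proof is correct and uses the same toolkit as the paper (the transitivity lemma, Lemma~\ref{lemma sp}, and Corollary~\ref{relazione chiusura orbita}); the only difference is organizational. The paper closes a single cycle of inclusions $\bar{\Omega}(\omega(Y)) \subseteq \bar{\Omega}(\bar{\Omega}(Y)) \subseteq \bar{\Omega}(Y) \subseteq \bar{\Omega}(\omega(Y))$, whereas you treat the two equalities separately; also, for $\bar{\Omega}(\bar{\Omega}(Y)) \subseteq \bar{\Omega}(Y)$ you invoke Lemma~\ref{tran} directly on the sets $\Omega(\cdot,\varepsilon,T)$ while the paper phrases the same step through the transitivity of $\mathcal{SP}(X)$, and your detour through $\omega(\bar{\Omega}(Y)) = \bar{\Omega}(Y)$ for the reverse inclusion is unnecessary once the cycle is in place --- but none of this changes the substance.
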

\begin{proof}
From one hand, by \eqref{equazione per lemma}, Lemma \ref{lemma sp} and transitivity proved in Proposition \ref{sprelation}, it follows that
\begin{equation*}\label{eq ultimo lemma}
\bar{\Omega}(\omega(Y))\subseteq\bar{\Omega}(\bar{\Omega}(Y))\subseteq\bar{\Omega}(Y).
\end{equation*}
On the other hand, let $x\in \bar{\Omega}(Y) = \mathcal{SP}_Y(X)$ by Lemma \ref{lemma sp} again. Let $y\in Y$ such that $x\in\bar{\Omega}(y)$ and let $y_1\in\omega(y)\subseteq\omega(Y)$. Then --by Corollary \ref{relazione chiusura orbita} -- $(y_1,x)\in\mathcal{SP}(X)$. In other words, $x\in\mathcal{SP}_{\omega(y)}(X) \subseteq \mathcal{SP}_{\omega(Y)}(X)$. Hence, from Lemma \ref{lemma sp}, $x\in\bar{\Omega}(\omega(Y))$. This proves that $\bar{\Omega}(Y)\subseteq\bar{\Omega}(\omega(Y))$.
\end{proof}

%%%
\section{Strongly stable sets and strong chain recurrence sets} \label{sezione ris}

We start this section by recalling the notion of (Lyapunov) stable set and then we define strongly stable sets. By Remark \ref{osse Hurley}, every strongly stable set is stable. Moreover --see Example \ref{caratte attractors}-- an attractor results strongly stable. Afterwards, in Theorem \ref{MAIN THEO 1} we prove the decomposition of the set $\bar{\Omega}(Y)$ in terms of strongly stable sets. As an outcome, we finally show that these sets play the role of attractors in the Conley-type decomposition of ${\mathcal{SCR}}(\phi)$. Indeed, Theorem \ref{teorema 5.3.1} is the analogous of point $(i)$ of Theorem \ref{CON}: the strong chain recurrent set of a continuous flow on a compact metric space coincides with the intersection of all strongly stable sets and their complementary.
\begin{definizione} \label{non rep} {\em{(Stable set)}} \\
A closed set $B \subset X$ is stable if it has a neighborhood base of forward invariant sets. 
\end{definizione}
\noindent We notice that the neighborhood base of forward invariant sets can assumed to be closed, since the closure of a forward invariant set is forward invariant.
\begin{lemma} \label{deco} If $B \subset X$ is stable and $\omega(x) \cap B \ne \emptyset$ then $\omega(x) \subseteq B$.
\end{lemma}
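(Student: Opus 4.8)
The plan is to prove the inclusion directly, exploiting the neighborhood base of closed forward invariant sets supplied by stability (as noted right after Definition \ref{non rep}, such a base may be taken closed). Fix a point $p \in \omega(x) \cap B$ and let $U$ be an arbitrary member of this base. Since $U$ is a neighborhood of $B$, we have $p \in B \subseteq \operatorname{int}(U)$. Because $p \in \omega(x)$, Proposition \ref{eccola}$(c)$ provides a sequence $t_n \to +\infty$ with $\phi_{t_n}(x) \to p$, so $\phi_{t_n}(x) \in \operatorname{int}(U) \subseteq U$ for all sufficiently large $n$.

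Next I would use the forward invariance of $U$: fixing such an $n$ (with $t_n \ge 0$), $\phi_s\bigl(\phi_{t_n}(x)\bigr) \in U$ for every $s \ge 0$, that is $\phi_{[t_n,+\infty)}(x) \subseteq U$. As $U$ is closed, $cl\bigl(\phi_{[t_n,+\infty)}(x)\bigr) \subseteq U$, and therefore, directly from the definition of the $\omega$-limit, $\omega(x) \subseteq cl\bigl(\phi_{[t_n,+\infty)}(x)\bigr) \subseteq U$.

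Finally, the above holds for every $U$ in the base. Since the base consists of neighborhoods of the closed set $B$, its intersection is exactly $B$ (any $z \notin B$ admits the open neighborhood $X \setminus \{z\}$ of $B$, hence some base element avoids $z$). Consequently $\omega(x) \subseteq \bigcap_U U = B$, which is the claim. I do not expect a genuine obstacle here; the only points requiring a little care are the twofold use of the word \emph{base} — once to know the forward invariant closed neighborhoods exist, and once to identify their intersection with $B$ — together with the harmless remark that $\operatorname{int}(U)$ already contains $B$ because $U$ is, by definition, a neighborhood of $B$.
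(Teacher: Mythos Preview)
Your argument is correct and follows essentially the same route as the paper: pick a closed forward invariant neighborhood $U$ of $B$, use $\omega(x)\cap B\neq\emptyset$ to find a time at which the orbit of $x$ enters $U$, then forward invariance and closedness give $\omega(x)\subseteq U$, and intersecting over the base yields $\omega(x)\subseteq B$. You simply supply a bit more detail (the explicit choice of $p$, the appeal to Proposition~\ref{eccola}$(c)$, and the verification that $\bigcap_U U=B$) than the paper's terse version.
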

\begin{proof} Let $U$ be a closed forward invariant neighborhood of $B$. Since $\omega(x) \cap B \ne \emptyset$, there exists a time $\bar{t} > 0$  such that $\phi_{\bar{t}}(x) \in U$. Since $U$ is closed and forward invariant, we have that $\omega(x) \subset cl \left\{ \phi_{[\bar{t},+\infty)}(x) \right\} \subset U$. As $U$ is arbitrary in the neighborhood base of $B$, we conclude that $\omega(x) \subseteq B$.
\end{proof}
\begin{definizione} \label{non rep} {\em{(Strongly stable set)}} \\ %{\marginpar{{\color{red}{non-repeller}}}} 
A closed set $B \subset X$ is strongly stable if there exist $\rho > 0$, a family $(U_{\eta})_{\eta \in (0,\rho)}$ of closed nested neighborhoods of $B$ and a function 
$$(0,\rho) \ni \eta \mapsto t(\eta) \in (0,+\infty)$$
bounded on compact subsets of $(0,\rho)$, such that 
\begin{itemize}
\item[$(i)$] For any $0<\eta < \lambda < \rho$, %$$\{x \in X: \ d(x,B) < \eta \} \subseteq U_{\eta} \qquad \text{and} \qquad 
$\{x \in X: \ d(x,U_{\eta}) < \lambda -\eta \} \subseteq U_{\lambda}$.
\item[$(ii)$] $B = \bigcap_{\eta \in (0,\rho)} \omega(U_{\eta})$.
\item[$(iii)$] For any $0 < \eta < \rho$, $cl \left\{ \phi_{[t(\eta),+\infty)}(U_{\eta}) \right\} \subseteq U_{\eta}$.
\end{itemize}
\end{definizione} 
\begin{remark} \label{osse Hurley} Every strongly stable set is stable. Indeed, if $U$ is an open neighborhood of $B$, then there exists $\eta \in (0,\rho)$ such that $\omega(U_{\eta}) \subset U$. Hence, there exists a time $\bar{t} > 0$ such that $V_{\eta} := cl \left \{ \phi_{[\bar{t},+\infty)} (U_{\eta}) \right\} \subset U$. Clearly, $V_{\eta}$ results closed and forward invariant. 
%{\color{red}Since $\phi_t$ is continuous, every $\phi_t(U_{\eta})$ is a neighborhood of $\phi_t(B)=B$. Hence, $V_{\eta}$ is a (closed and) forward invariant neighborhood of $B$ contained in $U$. If we consider $U_{\lambda}$ as the neighborhood $U$ of $B$, then for any $\eta<\lambda$ $U_{\eta}\subset U_{\lambda}$ and the same argument holds.}
Since $U_{\eta}$ is a neighborhood of $B$ and $\phi_t$ is continuous, then every $\phi_t(U_{\eta})$ is a neighborhood of $\phi_t(B)=B$. Hence $V_{\eta}$ is a (closed and) forward invariant neighborhood of $B$.
%Hence, there exists a time $t_\eta > 0$ such that $V_{\eta} := cl \left \{ \phi_{[t_{\eta},+\infty)} (U_{\eta}) \right\} \subset U$. Clearly, $V_{\eta}$ results closed and invariant. Moreover, since $U_{\eta}$ is a neighborhood of $B \subset U_{\lambda}$ for any $\lambda < \eta$ and the flow is continuous, every $\phi_t(U_{\eta})$ is a neighborhood of $\phi_t(B) = B$. Hence $(V_{\eta})_{\eta \in (0,\rho)}$ is a neighborhood base for $B$ of forward invariant sets.  
\end{remark}
\begin{esempio} \label{caratte attractors} Every attractor is strongly stable. 
\noindent Indeed, for $\eta > 0$, let $$A_{\eta} := \{x \in X : \ d(x,A) \le \eta \}.$$ 
Condition $(i)$ of Definition \ref{non rep} is clearly satisfied. Moreover, since $A \subset X$ is an attractor, there exists a neighborhood $U$ of $A$ such that $\omega(U) = A$. As a consequence, there exists $\rho > 0$ such that 
$A \subset A_{\rho} \subseteq U$. Therefore, for any $\eta \in (0,\rho)$, $\omega(A_{\eta}) = A$, satisfying then condition $(ii)$ of Definition \ref{non rep}. In addition, there exists a time $t(\eta) > 0$ such that 
\begin{equation} \label{inclu}
cl\left\{ \phi_{[t(\eta),+\infty)} (A_{\eta}) \right\}  \subset int(A_{\eta}).
\end{equation}
We finally prove that the function $(0,\rho) \ni \eta \mapsto t(\eta) \in (0,+\infty)$ is bounded on compact subsets of $(0,\rho)$. Since $\omega(U) = A \subset int(A_{\eta})$ for any $\eta \in (0,\rho)$, we choose the first time $\bar{t}(\eta) > 0$ such that
$$cl \left\{ \phi_{[\bar{t}(\eta),+\infty)} (U) \right\} \subset int(A_{\eta}).$$
Clearly, the function $(0,\rho) \ni \eta \mapsto \bar{t}(\eta) \in (0,+\infty)$ is decreasing and therefore it is bounded on compact subsets of $(0,\rho)$. Since
$$cl \left\{ \phi_{[\bar{t}(\eta),+\infty)}(A_{\eta}) \right\} \subseteq cl \left\{ \phi_{[\bar{t}(\eta),+\infty)}(U) \right\} \subset int(A_{\eta}),$$
we have that $t(\eta) \le \bar{t}(\eta)$, $\forall \eta \in (0,\rho).$ As a consequence, also the function $(0,\rho) \ni \eta \mapsto t(\eta) \in (0,+\infty)$ is bounded on compact subsets of $(0,\rho)$. \\
\noindent We observe that, in the case of attractors, condition $(iii)$ of Definition \ref{non rep} is realized in a stronger way. Indeed, for any $0<\eta<\rho$, one has the stricter inclusion (\ref{inclu}).
\end{esempio} 
\begin{esempio} \label{11}
On the interval $[a,b]$, let us consider the dynamical system of Figure \ref{Figura1}, where the bold line and the arrows denote respectively fixed points and the direction of the flow $\phi$. In such a case, ${\mathcal{SCR}}(\phi) = \{ a, b \}$, the point $b$ is an attractor (and therefore strongly stable). However, every interval $[\alpha,\beta]$ of fixed points with $\beta$ different from $c$ is a strongly stable set but not an attractor.
\begin{figure}[h]
\centering
\begin{tikzpicture}[scale=0.8]
\draw plot [mark=*] coordinates
	{(0,0) (0,5) (0,3.5)};
\draw [ultra thick] (0,2) -- (0,3.5);
\draw (0,0) -- (0,5);
\draw [->] (0,0) -- (0,1);
\draw [->] (0,3.5) -- (0,4.25);
%\draw [red] plot [mark=*] coordinates
	%{(0,2)};
\node at (0.5,0) {a};
\node at (0.5, 5) {b};
\node at (0.5,3.5) {c};
%\node at (0.5, 2) {B};
\end{tikzpicture}
\caption{
%Dynamical system of Example \ref{11}
}
\label{Figura1}
\end{figure}
\end{esempio}
\begin{esempio} \label{esempio inclu} If $Y \subset X$ is closed, then every $\omega(\bar{\Omega}(Y,\varepsilon,T))$ is strongly stable, with $U_{\eta} = \bar{\Omega}(Y,\varepsilon + \eta, T)$. Indeed, Lemma \ref{lemma 3} implies condition $(i)$ of Definition \ref{non rep}. Moreover, from Lemma \ref{lemma condizione iii ss} $cl \left\{\phi_{[T,+\infty)}(\bar{\Omega}(Y,\varepsilon+\eta,T)) \right\} \subseteq \bar{\Omega}(Y,\varepsilon+\eta,T)$ for any $\eta>0$, which is condition $(iii)$ of Definition \ref{non rep}. Finally, from definition \eqref{hurley 1} and the property that $\omega(\bar{\Omega}(Y,\varepsilon+\eta,T)) \subseteq \bar{\Omega}(Y,\varepsilon+\eta,T)$ (see Corollary \ref{lemma 1.1}), we deduce
$$
\omega(\bar{\Omega}(Y,\varepsilon,T))\subseteq\bigcap_{\eta>0}\omega(\bar{\Omega}(Y,\varepsilon+\eta,T))\subseteq\bigcap_{\eta>0}\bar{\Omega}(Y,\varepsilon+\eta,T)=\bar{\Omega}(Y,\varepsilon,T).
$$
We then conclude that $\omega(\bar{\Omega}(Y,\varepsilon,T))=\bigcap_{\eta>0}\omega(\bar{\Omega}(Y,\varepsilon+\eta,T))$, proving condition $(ii)$ of Definition \ref{non rep}.

\end{esempio} 
\indent The next theorem is fundamental in order to prove the Conley-type decomposition of the strong chain recurrent set. The corresponding result in the chain recurrent case is statement C of Section 6 in \cite{conl33}.
\begin{teorema} \label{MAIN THEO 1} If $Y \subset X$ is closed, then $\bar{\Omega}(Y)$ is the intersection of all strongly stable sets in $X$ which contain $\omega(Y)$, that is %\marginpar{{\color{red}{THEO. 1}}}
\begin{equation}\label{scrittuta di Omegabar Y}
\bar{\Omega}(Y) = \bigcap \left\{ B: \ B\text{ is strongly stable and } \omega(Y)\subseteq B \right\}.
\end{equation}
\end{teorema}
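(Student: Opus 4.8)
The plan is to prove the equality by establishing the two inclusions separately; only one of them requires real work.

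For the inclusion ``$\supseteq$'', I would invoke the machinery already in place. By Example \ref{esempio inclu}, for every $\varepsilon>0$ and $T>0$ the set $\omega(\bar{\Omega}(Y,\varepsilon,T))$ is strongly stable (with $U_\eta=\bar{\Omega}(Y,\varepsilon+\eta,T)$), and by Lemma \ref{lemma 4} together with $\omega(\omega(Y))=\omega(Y)$ and monotonicity of $\omega$, it contains $\omega(Y)$. Hence every set of this form occurs in the family appearing on the right-hand side of \eqref{scrittuta di Omegabar Y}, so that right-hand intersection is contained in $\bigcap_{\varepsilon>0,\,T>0}\omega(\bar{\Omega}(Y,\varepsilon,T))$, which equals $\bar{\Omega}(Y)$ by Lemma \ref{lemma 12}. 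This settles ``$\supseteq$''.

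For the inclusion ``$\subseteq$'', I would fix an arbitrary strongly stable set $B$ with $\omega(Y)\subseteq B$, together with its associated data $\rho>0$, $(U_\eta)_{\eta\in(0,\rho)}$ and $\eta\mapsto t(\eta)$. Since $\bar{\Omega}(Y)=\bar{\Omega}(\omega(Y))$ (the last lemma of Section \ref{relazioni}) and $\bar{\Omega}(\omega(Y))$ is invariant (Lemma \ref{lemma 1.2}), it suffices to show $\bar{\Omega}(\omega(Y))\subseteq U_\eta$ for each $\eta\in(0,\rho)$: applying $\omega$ and using invariance together with condition $(ii)$ then yields $\bar{\Omega}(\omega(Y))=\omega(\bar{\Omega}(\omega(Y)))\subseteq\bigcap_\eta\omega(U_\eta)=B$. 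To prove $\bar{\Omega}(\omega(Y))\subseteq U_\eta$, put $\mu:=\eta/2$, $\varepsilon_0:=\eta/2$, and $T_0:=\sup_{[\eta/2,\eta]}t$, which is finite precisely because $t(\cdot)$ is bounded on compact subsets of $(0,\rho)$. I claim $\Omega(\omega(Y),\varepsilon_0,T_0)\subseteq U_\eta$, which is enough since $\bar{\Omega}(\omega(Y))\subseteq\Omega(\omega(Y),\varepsilon_0,T_0)$. Given a strong $(\varepsilon_0,T_0)$-chain $(x_i,t_i)_{i=1,\dots,n}$ with $x_1\in\omega(Y)$ and $x_{n+1}=x$, note first that $x_1\in\omega(Y)\subseteq B\subseteq\omega(U_\mu)\subseteq U_\mu$, using $(ii)$ and $(iii)$. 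Writing $d_i=d(\phi_{t_i}(x_i),x_{i+1})$, choosing $d_i'>d_i$ with $\sum_i d_i'<\varepsilon_0$ and setting $s_k=\sum_{i\le k}d_i'$, I would prove by induction on $j$ that $x_j\in U_{\mu+s_{j-1}}$: condition $(iii)$ gives $\phi_{t_j}(x_j)\in U_{\mu+s_{j-1}}$ because $t_j\ge T_0\ge t(\mu+s_{j-1})$, and then the collar condition $(i)$ absorbs the jump $d_j<d_j'=s_j-s_{j-1}$ and places $x_{j+1}$ in $U_{\mu+s_j}$. Since $s_n<\varepsilon_0$, the endpoint $x$ lies in $U_{\mu+s_n}\subseteq U_{\mu+\varepsilon_0}=U_\eta$ by nestedness, which is what we wanted.

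The genuinely delicate point --- and the place where strong chain recurrence rather than plain chain recurrence is essential --- is the inductive control above: the total displacement of a strong $(\varepsilon_0,T_0)$-chain is bounded by $\varepsilon_0$, so its successive vertices can leak outward across the nested family $(U_\eta)$ by a total ``budget'' of at most $\varepsilon_0$, and one must orchestrate conditions $(i)$ and $(iii)$ so that this budget is never overrun while a single time threshold $T_0$ works uniformly for every step of the chain. The finiteness of $T_0$ relies exactly on the hypothesis that $\eta\mapsto t(\eta)$ is bounded on the compact interval $[\eta/2,\eta]$. For an ordinary (non-strong) $(\varepsilon,T)$-chain this scheme collapses, since arbitrarily many $\varepsilon$-jumps could accumulate unbounded total displacement and escape $U_\eta$; this is why strongly stable sets, rather than attractors, are the right objects here.
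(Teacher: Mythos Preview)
Your proof is correct and follows essentially the same route as the paper: both inclusions are handled the same way, with the ``$\subseteq$'' direction proved by the identical inductive ``budget'' argument that uses condition $(iii)$ to keep $\phi_{t_j}(x_j)$ inside $U_{\mu+s_{j-1}}$ and condition $(i)$ to absorb each jump into the next level $U_{\mu+s_j}$, while $T_0=\sup_{[\eta/2,\eta]}t(\cdot)$ furnishes a uniform time threshold. The only cosmetic difference is that you first reduce to chains starting in $\omega(Y)$ via $\bar{\Omega}(Y)=\bar{\Omega}(\omega(Y))$, whereas the paper starts chains directly from $Y$ and introduces an additional settling time $\bar{t}(\eta/2)$ (from Proposition \ref{eccola}$(d)$) to land the first point in $U_{\eta/2}$; both initializations lead to the same induction.
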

\begin{proof} $(\subseteq)$ Let $B \subset X$ be strongly stable and $\eta \in (0,\rho)$ be fixed. By hypothesis, $\omega (Y) \subseteq B \subset U_{\eta/2}$. As a consequence, there exists a time $\bar{t}(\eta/2) > 0$ such that
\begin{equation} \label{PRIMA INCLU}
cl \left\{ \phi_{[\bar{t}(\eta/2),+\infty)} (Y) \right\} %\subseteq cl\left(\phi_{[\bar{t}(\eta/2),+\infty)} (Y) \right) 
\subseteq U_{\eta/2}.
\end{equation}
Let us define
$$T(\eta) := \max \{ \bar{t}(\eta/2), \max_{\lambda \in [0,\eta/2]} t(\eta/2 + \lambda) \}.$$
%{\color{red}{Il sup nella definizione pu\`o essere $+\infty$???}}
We notice that $\max_{\lambda \in [0,\eta/2]} t(\eta/2 + \lambda)$ is achieved since the function $(0,\rho) \ni \eta \mapsto t(\eta) \in (0,+\infty)$ is bounded on compact subsets of $(0,\rho)$. \\
We proceed by proving that every strong $\left(\eta/2,T(\eta)\right)$-chain starting at $Y$ ends in $U_{\eta}$, i.e. $\bar{\Omega}(Y,\eta/2,T(\eta))\subseteq U_{\eta}$. This thesis immediately follows from these facts. By (\ref{PRIMA INCLU}), for every $x_1 \in Y$, if $t_1 \ge T(\eta) \ge \bar{t}(\eta/2)$ then $cl \left( \phi_{[t_1,+\infty)}(x_1) \right) \subseteq U_{\eta/2}$. By condition $(i)$ of Definition \ref{non rep}, if the amplitude of the first jump is $\eta_1 \in (0,\eta/2]$, the point $x_2$ of the chain is contained in $U_{\eta/2 + \eta_1}$. Moreover, for $t_2 \ge T(\eta) \ge \max_{\lambda \in [0,\eta/2]} t(\eta/2 + \lambda)$, by condition $(iii)$ of Definition \ref{non rep}, we have that $cl \left( \phi_{[t_2,+\infty)}(x_2) \right) \subseteq U_{\eta/2 + \eta_1}$. Iterating the above argument and using the fact that the sum of the amplitudes of the jumps is smaller than $\eta/2$, we obtain that
$$\bar{\Omega}(Y,\eta/2,T(\eta)) \subseteq U_{\eta}.$$
Since the above argument holds for any $\eta \in (0,\rho)$, we conclude that
$$\bar{\Omega}(Y) \subseteq \bigcap_{\eta \in (0,\rho)} \omega(U_{\eta}) = B$$
where, in the last equality, we have used condition $(ii)$ of Definition \ref{non rep}. \\
\noindent $(\supseteq)$ We know from Example \ref{esempio inclu} that every $\omega(\bar{\Omega}(Y,\varepsilon,T))$ is strongly stable. Moreover, by Lemma \ref{lemma 12}, $\bar{\Omega}(Y) = \bigcap_{\varepsilon > 0, \ T > 0} \omega(\bar{\Omega}(Y,\varepsilon,T))$. \end{proof}
%\marginpar{\color{red}{Comple\-mentary}}
\indent Given a strongly stable set $B \subset X$, we define the complementary of $B$ to be the set
\begin{equation} \label{B bullet}
B^{\bullet} := \{x \in X : \ \omega(x) \cap B = \emptyset \}.
\end{equation}
The set $B^{\bullet} \subset X$ is invariant and disjoint from $B$ but it is not necessarily closed even if $B$ is closed (see also Paragraph 1.5 in \cite{conARTICOLO}). If $B$ is an attractor then $B^{\bullet}$ coincides with the so-called complementary repeller $B^*$ (see the first formula in (\ref{complementary})). Moreover, every $B \subset X$ strongly stable is stable. Therefore --by Lemma \ref{deco}-- for every $x \in X$, either $\omega(x) \subseteq B$ or $x \in B^{\bullet}$. \\
\indent We finally prove the Conley-type decomposition of the strong chain recurrent set in terms of strongly stable sets. 
\begin{teorema}\label{teorema 5.3.1} 
If $\phi : X \times \R \rightarrow X$ is a continuous flow on a compact metric space, then the strong chain recurrent set is given by
\begin{eqnarray} \label{nostra}
{\cal{SCR}}(\phi) = \bigcap \{ B \cup B^{\bullet} :\ B \text{ is strongly stable\,}\}.\label{seconda decomposizione teorema conclusivo}
\end{eqnarray}
\end{teorema}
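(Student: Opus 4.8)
The plan is to deduce the decomposition directly from Theorem \ref{MAIN THEO 1}, applied to singletons, together with the stability dichotomy recorded in Lemma \ref{deco}. The key preliminary observation is that, unravelling Definition \ref{scrdef} and the definitions \eqref{H1}--\eqref{hurley}, a point $x \in X$ is strong chain recurrent if and only if $x \in \bar{\Omega}(\{x\})$; since the singleton $\{x\}$ is closed, Theorem \ref{MAIN THEO 1} with $Y = \{x\}$ rephrases this as
$$
x \in \mathcal{SCR}(\phi) \ \Longleftrightarrow \ x \in \bigcap \left\{ B : \ B \text{ strongly stable and } \omega(x) \subseteq B \right\}.
$$
Thus the whole statement reduces to comparing this intersection, taken over strongly stable sets that contain $\omega(x)$, with the intersection of the sets $B \cup B^{\bullet}$ taken over all strongly stable sets $B$.

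For the inclusion $\mathcal{SCR}(\phi) \subseteq \bigcap \{ B \cup B^{\bullet} \}$ I would fix $x \in \mathcal{SCR}(\phi)$ and an arbitrary strongly stable set $B$. If $x \in B^{\bullet}$ there is nothing to prove; otherwise, by \eqref{B bullet}, $\omega(x) \cap B \ne \emptyset$, and since $B$ is stable by Remark \ref{osse Hurley}, Lemma \ref{deco} forces $\omega(x) \subseteq B$. Then $B$ lies in the family appearing in Theorem \ref{MAIN THEO 1} for $Y = \{x\}$, so $x \in \bar{\Omega}(\{x\}) \subseteq B$. In both cases $x \in B \cup B^{\bullet}$, as wanted.

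For the reverse inclusion I would argue by contraposition: if $x \notin \mathcal{SCR}(\phi)$ then $x \notin \bar{\Omega}(\{x\})$, so by Theorem \ref{MAIN THEO 1} there exists a strongly stable set $B$ with $\omega(x) \subseteq B$ but $x \notin B$. Since $\omega(x)$ is nonempty by Proposition \ref{eccola}$(a)$ and contained in $B$, we get $\omega(x) \cap B \ne \emptyset$, i.e.\ $x \notin B^{\bullet}$; combined with $x \notin B$ this yields $x \notin B \cup B^{\bullet}$, so $x$ does not belong to the right-hand side of \eqref{nostra}.

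Because the substantial work --- exhibiting the explicit strongly stable sets $\omega(\bar{\Omega}(Y,\varepsilon,T))$ that decompose $\bar{\Omega}(Y)$ --- has already been carried out in Example \ref{esempio inclu} and Theorem \ref{MAIN THEO 1}, I do not expect a serious obstacle in this final proof. The only points deserving care are the bookkeeping identification of strong chain recurrence of $x$ with the membership $x \in \bar{\Omega}(\{x\})$, and the correct use of the stability dichotomy to pass between the conditions ``$\omega(x) \subseteq B$'' and ``$x \notin B^{\bullet}$''. Finally I would observe, as noted after the statement, that when $\mathcal{SCR}(\phi) = \mathcal{CR}(\phi)$ this recovers Conley's formula \eqref{A A star}, in agreement with the fact that every attractor is strongly stable (Example \ref{caratte attractors}).
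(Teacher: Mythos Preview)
Your proof is correct and follows essentially the same route as the paper's: both reduce to Theorem \ref{MAIN THEO 1} applied to $Y=\{x\}$ and use the stability dichotomy (Remark \ref{osse Hurley} plus Lemma \ref{deco}, equivalently the remark preceding the theorem) to relate the conditions $\omega(x)\subseteq B$ and $x\notin B^{\bullet}$. The only cosmetic difference is that you argue the inclusion $(\supseteq)$ by contraposition, whereas the paper proves it directly; the ingredients are identical.
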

\begin{proof}
We prove the two inclusions. \\
~\newline
$(\subseteq)$ Let $x\in{\cal{SCR}}(\phi)$. By Theorem \ref{MAIN THEO 1}, this means that
$$x\in\bar{\Omega}(x) = \bigcap \{ B: \ B\text{ is strongly stable and } \omega(x)\subseteq B \}.$$
\noindent If $B$ is strongly stable and $\omega(x) \nsubseteq B$ then $x \in B^{\bullet}$. Therefore $x \in B \cup B^{\bullet}$ and the inclusion immediately follows. \\
~\newline
$(\supseteq)$ Let $x\in\bigcap\{ B \cup B^{\bullet}: \ B \text{ is strongly stable} \}$. Since $B \cap B^{\bullet} = \emptyset$, the next two cases occur.
If $x\in B$, then $\omega(x) \subseteq B$. On the other hand, if $x\in B^{\bullet}$ then, by definition, $\omega(x) \cap B = \emptyset$. This means that $x \in \bigcap \{ B: \ \omega(x) \subseteq B \} = \bar{\Omega}(x)$. Equivalently, $x \in {\cal{SCR}}(\phi)$.
\end{proof}
\noindent Clearly, decomposition (\ref{nostra}) equals to decomposition (\ref{altra}) when a dynamical system admits only attractors or, equivalently, $\cal{SCR}(\phi) = \cal{CR}(\phi)$. 
\begin{remark}
We remind that if $A_1 \ne A_2$ are attractors, then $A_1\cup A_1^*\neq A_2\cup A_2^*$. The same property does not hold true for strongly stable sets. Indeed, there could exist strongly stable sets $B_1 \ne B_2$ such that $B_1\cup B_1^{\bullet} = B_2\cup B_2^{\bullet}$. The obvious example is the trivial flow on a connected compact metric space: in such a case, any closed subset $B$ is strongly stable with $B^{\bullet} = X \setminus B$. As a consequence, Theorem \ref{teorema 5.3.1} can be rephrased as follows. Let us indicate by $\mathcal{SS}(X,\phi)$ the set of all strongly stable sets of $\phi: \R \times X \to X$. We define an equivalence relation $\sim$ on $\mathcal{SS}(X,\phi)$ by
$$B_1\sim B_2\quad\Longleftrightarrow\quad B_1\cup B_1^{\bullet}=B_2\cup B_2^{\bullet}$$
and we denote by $\mathcal{SS}(X,\phi) /\sim$ the set of the associated equivalence classes. By using this relation, decomposition (\ref{nostra}) equals to
\begin{equation} \label{nostra 1}
{\cal{SCR}}(\phi) = \bigcap \{ B \cup B^{\bullet} :\ B \in\mathcal{SS}(X,\phi)/\sim \}.
\end{equation}
\end{remark} 
For a $1$-dimensional dynamical system, we finally present an example of the above decomposition (\ref{nostra 1}). 
\begin{esempio}
\begin{figure}[h]
\centering
\begin{tikzpicture}
\draw (4,4) circle (20mm);
\draw [ultra thick] (4,6) arc (90:135:2);
\draw plot [mark=*] coordinates
	{(4,6)}; 
\node at (4,6.3) {B};
\draw plot [mark=*] coordinates
	{(2.586,5.414)};
\node at (2.286,5.5) {A};
\draw plot [mark=*] coordinates
	{(2.586,2.586)};
\node at (2.286,2.5) {D};
\draw plot [mark=*] coordinates
	{(5.414,2.586)};
\node at (5.714,2.5) {C};
%\draw (5.9,4) -- (6.1,4);
%\draw  plot [mark=*] coordinates
	%{(6,4)};
%\node at (6.3,4) {d};
%\draw (1.9,4)--(2.1,4);
%\node at (1.7,4) {b};
%\draw (2.95,5.782)--(3.05,5.682);
%\draw [red] plot [mark=*] coordinates
%	{(3,5.732)};
%\node at (2.8,5.95) {a};
%\draw [red] plot [mark=*] coordinates
%	{(3.482,5.932)};
%\draw (3.432,5.982)--(3.532,5.882);
%\node at (3.3,6.1) {e};
\draw [-latex] (4,6) arc (90:45:2);
\draw [-latex] (6,4) arc (0:-90:2);
\draw [-latex] (2.586,2.586) arc (225:150:2);
\end{tikzpicture}
\caption{}
\label{figura_esempio_dec_non_repeller}
\end{figure}
\noindent On the circle $\mathbb{S}^1=\R / \Z$ equipped with the standard quotient metric, let us consider the dynamical system of Figure \ref{figura_esempio_dec_non_repeller}, where the bold line and the arrows denote respectively fixed points and the direction of the flow $\phi$. In such a case, ${\cal{CR}}(\phi)=\mathbb{S}^1$ and ${\cal{SCR}} (\phi)=Fix(\phi)$. Such a dynamical system does not present any attractor but $\mathcal{SS}(X,\phi)$ has four equivalence classes, corresponding to these cases:
$$B_0 \cup B_0^{\bullet} =cl(\widehat{AD}), \qquad B_1 \cup B_1^{\bullet} =cl(\widehat{DC}), \qquad 
B_2 \cup B_2^{\bullet} =cl(\widehat{CB}), \qquad B_3 \cup B_3^{\bullet} =\mathbb{S}^1.$$
\noindent Finally, taking the (finite) intersection, we obtain
\[
{\cal{SCR}}(\phi)=\bigcap \{B_i\cup B_i^{\bullet}:\ i=0,1,2,3\}=Fix(\phi).
\]
\end{esempio}

\addcontentsline{toc}{chapter}{Bibliography}
\bibliographystyle{plain}
\bibliography{Bibliography}

\end{document}